\newcommand{\R}{\mathbb R}
\newcommand{\Z}{\mathbb Z}
\newcommand{\C}{\mathbb C}
\newcommand{\secao}[1]{\section{#1}\setcounter{equation}{0}}
\newtheorem{theorem}{Theorem}[section]
\newtheorem{proposition}[theorem]{Proposition}
\newtheorem{remark}[theorem]{Remark}
\newtheorem{lemma}[theorem]{Lemma}
\newtheorem{corollary}[theorem]{Corollary}
\newtheorem{definition}[theorem]{Definition}
\newtheorem{example}[theorem]{Example}
\begin{document}
%\today
\title[Supercritical KdV]{On the supercritical KdV equation with time-oscillating nonlinearity}

\author{M. Panthee}
\thanks{M. P. was partially supported by  the Research Center of Mathematics of the University of Minho, Portugal through the FCT Pluriannual Funding Program, and through the project PTDC/MAT/109844/2009, and M. S.  was partially supported by FAPESP Brazil.}
\address{Centro de Matem\'atica, Universidade do Minho\\ 4710--057, Braga, Portugal.}
\email{mpanthee@math.uminho.pt}
\author{M. Scialom}

\address{IMECC-UNICAMP\\
13083-970, Campinas, S\~ao Paulo, Brazil}
\email{scialom@ime.unicamp.br}

\keywords{Korteweg-de Vries equation, Cauchy problem, local \& global  well-posedness}
\subjclass[2000]{35Q35, 35Q53}
\begin{abstract} For the initial value problem (IVP) associated the generalized Korteweg-de Vries (gKdV) equation with supercritical nonlinearity,
\begin{equation*}
u_{t}+\partial_x^3u+\partial_x(u^{k+1}) =0,\qquad k\geq 5,
\end{equation*}
 numerical evidence \cite{BDKM1, BSS1} shows that, there are initial data  $\phi\in H^1(\mathbb{R})$  such that the corresponding solution  may blow-up in finite time. Also, with the evidence from numerical simulation \cite{ACKM, KP}, the physicists claim that a periodic time dependent term in factor of the nonlinearity would disturb the blow-up solution, either accelerating  or delaying it.
 
In this work, we investigate  the IVP associated to the gKdV equation
\begin{equation*}
u_{t}+\partial_x^3u+g(\omega t)\partial_x(u^{k+1}) =0, 
\end{equation*}
 where $g$ is a periodic function and $k\geq 5$ is an integer. We prove that, for given initial data $\phi \in H^1(\R)$, as $|\omega|\to \infty$,  the solution $u_{\omega}$ converges to the solution $U$ of the initial value problem associated to
\begin{equation*}
U_{t}+\partial_x^3U+m(g)\partial_x(U^{k+1}) =0,
\end{equation*}
with the same initial data, where $m(g)$ is the average of the periodic function $g$. Moreover, if the solution $U$ is global and satisfies $\|U\|_{L_x^5L_t^{10}}<\infty$, then we prove that the solution $u_{\omega}$ is also global provided $|\omega|$ is sufficiently large.

\end{abstract}

\maketitle

%%%%%%%%%%%%%%%%%%%%%%%%%%
\secao{Introduction}
%%%%%%%%%%%%%%%%%%%%%%%%%%

Motivated from our earlier work in \cite{CPS1} for the critical KdV equation, we  consider the initial value problem (IVP)
\begin{equation}\label{ivp1}
\begin{cases}
u_{t}+\partial_x^3u+g(\omega t)\partial_x(u^{k+1}) =0,\\
u(x,t_0)=\phi(x),
\end{cases}
\end{equation}
where $x,\, t, t_0, \omega\in \R$ and $u=u(x,t)$ is a real valued function, $k\geq 5$ is an integer  and $g\in C(\R, \R)$ is a periodic function
with period $L>0$. To simplify the analysis, we translate the initial time $t_0$ to $0$ and consider the following IVP
\begin{equation}\label{ivp1.1}
\begin{cases}
u_{t}+\partial_x^3u+g(\omega(t+t_0))\partial_x(u^{k+1}) =0,\\
u(x,0)=\phi(x).
\end{cases}
\end{equation}

Before analyzing the  IVP \eqref{ivp1} with time oscillating nonlinearity, we discuss
some aspects of the supercritical  Korteweg-de Vries (KdV) equation,
\begin{equation}\label{ivp2}
\begin{cases}
u_{t}+\partial_x^3u+ \partial_x(u^{k+1}) =0,\qquad k\geq 5,\\
u(x,0)=\phi(x),\qquad x,\, t\in \R.
\end{cases}
\end{equation}

In the literature, the equation \eqref{ivp2} is known as the supercritical KdV equation because, if one considers the  nonlinearity $\partial_x(u^{k+1})$, $k\in \Z$, then the case when $k =4$  is the critical  one. As described in \cite{KPV3}, the case $k=4$ is called critical for three different reasons. First one is that the global solution exists for all data  in $H^1(\R)$,  whenever $k=1,2,3$. While for $k=4$ the global solution exists only for small data (i.e., data with small $H^1(\R)$-norm). Second reason is that the index $k=4$ is critical for the orbital stability of the solitary wave solutions, see \cite{BSS1}. Finally, the third reason is that the case $k=4$ is the only power for which a solitary wave solution cannot have arbitrarily small $L^2$-norm, see \cite{KPV3}.

Well-posedness issues for the IVP \eqref{ivp2} have been extensively  studied in the literature, see for example \cite{Kato1} and \cite{KPV3}, \cite{KPV4} and references therein. A detailed account of the recent well-posedness results can be found in Kenig, Ponce and Vega \cite{KPV3}, where they proved that, there exists $\delta_k >0$ such that  the IVP \eqref{ivp2} is globally well-posed for any data $ \phi\in H^s(\R)$, $s\geq s_k:=\frac12-\frac k2$ satisfying $\|D_x^{s_k}\phi\|_{L_x^2}<\delta_k$.  They were also able to relax the smallness condition on the given data to obtain local well-posedness result, but paying the price that the existence time now depends on the shape of the data $\phi$ and not just in its size. These are the best well-posedness results in the sense that $s=s_k$ is the critical exponent given by the scaling argument. However, for data in $H^s(\R)$, $s>s_k$, they were able to remove the size and shape restriction  and got local-well posedness for arbitrary data with life span $T$ of the solution depending on $\|\phi\|_{H^s(\R)}$. Quite recently, Farah et. al. \cite{FLP3} considered the IVP \eqref{ivp2} to deal global well-posedness for the data with low Sobolev regularity. In this context, they proved the following local well-posedness result in the function space slightly different from the one used in \cite{KPV3}. In what follows, we state this result, because we will modify it to suit in our context later on. 

\begin{theorem}\label{th-flp1}\cite{FLP3} Let $k>4$ and $s>s_k:=\frac12-\frac k2$. Then for any $\phi\in H^s(\R)$ there exist $T=T(\|\phi\|_{H^s(\R)})>0$ (with $T(s, \rho) \to \infty$ as $\rho \to 0$) and a unique strong solution $u$  to the IVP \eqref{ivp2} satisfying:
\begin{equation}\label{ps.0}
u \in C([0, \,T]; H^s(\R)),
\end{equation}
\begin{equation}\label{ps.1}
\|\partial_xu\|_{L_x^{\infty}L_T^2} +\|D_x^s\partial_xu\|_{L_x^{\infty}L_T^2} < \infty,
\end{equation}
\begin{equation}\label{ps.2}
\|u\|_{L_x^{5}L_T^{10}} +\|D_x^su\|_{L_x^{5}L_T^{10}} < \infty, 
\end{equation}
\begin{equation}\label{ps.3}
\|D_t^{\gamma_k}D_x^{\alpha_k}D_t^{\beta_k}u\|_{L_x^{p_k}L_T^{q_k}} < \infty,
\end{equation}
where
\begin{equation}\label{ps.5}
\alpha_k= \frac1{10}-2{5k}, \qquad \beta_k= \frac3{10}-\frac6{5k}, \qquad \gamma_k= \gamma_k(s)=\frac{s-s_k}3
\end{equation}
\begin{equation}\label{ps.6}
\frac1{p_k} =\frac2{5k}+\frac1{10}, \qquad \frac1{q_k}= \frac3{10}-\frac4{5k}.
\end{equation}

Moreover, for any $T' \in (0, T)$, there exists a neighborhood
 $\mathcal{V}$ of $\phi$ in $H^s(\R)$ such that the map
 $\tilde{\phi}\mapsto \tilde{u}$ from
 $\mathcal{V}$ into the class defined by \eqref{ps.0} to \eqref{ps.3} with
 $T'$ in place of $T$ is Lipschitz.
\end{theorem}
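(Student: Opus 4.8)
The plan is to recast the IVP \eqref{ivp2} as the fixed point problem
\begin{equation*}
u=\Phi[u],\qquad \Phi[u](t):=e^{-t\partial_x^3}\phi-\int_0^t e^{-(t-t')\partial_x^3}\,\partial_x\big(u^{k+1}\big)(t')\,dt',
\end{equation*}
and to run a contraction argument on a complete metric space $X_T^s$ consisting of the functions $u$ on $\R\times[0,T]$ for which the five quantities in \eqref{ps.0}--\eqref{ps.3} are finite, normed by their sum; I would look for a fixed point in the closed ball $B_R=\{u\in X_T^s:\|u\|_{X_T^s}\le R\}$ with $R$ comparable to $\|\phi\|_{H^s(\R)}$ and $T$ to be chosen. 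Two ingredients are required: (i) linear estimates for the Airy group $e^{-t\partial_x^3}$ and for the retarded operator $g\mapsto\int_0^t e^{-(t-t')\partial_x^3}\partial_x g\,dt'$ in each norm defining $X_T^s$; (ii) a multilinear estimate of the form $\|\Phi[u]-e^{-t\partial_x^3}\phi\|_{X_T^s}\le c\,T^{\theta}\|u\|_{X_T^s}^{k+1}$, together with its analogue for differences, for some exponent $\theta=\theta(s-s_k)>0$.

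For (i), I would invoke the now-classical estimates of Kenig--Ponce--Vega: unitarity of the group on $H^s(\R)$; the sharp Kato local smoothing $\|\partial_x D_x^{s}e^{-t\partial_x^3}\phi\|_{L_x^{\infty}L_t^2}\lesssim\|\phi\|_{H^s}$ with its double-smoothing retarded counterpart, which is what permits estimating the $\partial_x$-derivative of the Duhamel term \emph{without losing a derivative}, at the expense of placing the nonlinearity in an $L_x^1L_t^2$-type norm; the Strichartz and maximal-function estimates that control the $L_x^5L_T^{10}$-norms in \eqref{ps.2}; and, for the mixed-derivative norm \eqref{ps.3}, the identity $\partial_t e^{-t\partial_x^3}=-\partial_x^3 e^{-t\partial_x^3}$, which turns the temporal derivatives $D_t^{\gamma_k}$, $D_t^{\beta_k}$ into powers of $\partial_x$ and reduces \eqref{ps.3}, for the free evolution, to an admissible space-time bound --- the exponents \eqref{ps.5}--\eqref{ps.6} being chosen precisely so that $(\alpha_k,\beta_k,\gamma_k,p_k,q_k)$ is such a combination and so that step (ii) can be closed. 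Combined with the matching retarded versions, these estimates bound $\|\Phi[u]\|_{X_T^s}$ by $c_0\|\phi\|_{H^s}$ plus the $X_T^s$-norms of $\partial_x(u^{k+1})$ measured in the appropriate dual spaces.

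For (ii), which is the heart of the matter, I would write $\partial_x(u^{k+1})=(k+1)u^{k}\,\partial_x u$ and apply the Kenig--Ponce--Vega fractional Leibniz rule to distribute the operators $D_x^{s}$ (and, for \eqref{ps.3}, the fractional temporal derivatives as well) onto a single factor at a time. In each resulting product, one factor carrying a derivative is put into the Kato-smoothing slot $L_x^{\infty}L_t^2$ --- this is where the single derivative of the nonlinearity is paid for by the one unit of smoothing gained by the group --- while the remaining $k$ factors are spread, by H\"older's inequality in $x$ and in $t$, across the Strichartz-type norms $\|u\|_{L_x^5L_T^{10}}$, $\|D_x^{s}u\|_{L_x^5L_T^{10}}$ and, crucially, the mixed norm \eqref{ps.3}, whose exotic exponents are exactly what is needed for this H\"older bookkeeping to balance when $k>4$. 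The surplus regularity $s-s_k>0$, carried by the $D_t^{\gamma_k}$ factor with $\gamma_k=(s-s_k)/3$, is converted --- via H\"older in time and a Sobolev embedding in the time variable --- into a positive power $T^{\theta}$. This gives $\|\Phi[u]-e^{-t\partial_x^3}\phi\|_{X_T^s}\le c\,T^{\theta}\|u\|_{X_T^s}^{k+1}$ and, by multilinearity, $\|\Phi[u]-\Phi[v]\|_{X_T^s}\le c\,T^{\theta}\big(\|u\|_{X_T^s}^{k}+\|v\|_{X_T^s}^{k}\big)\|u-v\|_{X_T^s}$. Taking $R=2c_0\|\phi\|_{H^s}$ and then $T$ small enough that $c\,T^{\theta}R^{k}\le\frac12$, the map $\Phi$ is a contraction on $B_R$, which produces the solution; the resulting time can be taken to be $T\sim\|\phi\|_{H^s}^{-k/\theta}$, so that $T\to\infty$ as $\|\phi\|_{H^s}\to0$. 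The membership \eqref{ps.0} holds because $\Phi$ maps into $C([0,T];H^s(\R))$, and \eqref{ps.1}--\eqref{ps.3} are built into $X_T^s$. Uniqueness in $X_T^s$ and the Lipschitz property of the map $\tilde\phi\mapsto\tilde u$ on any subinterval $[0,T']$ with $T'<T$ follow from the same difference estimate, the passage to $T'<T$ ensuring a contraction constant uniformly below $1$ on a neighborhood of $\phi$.

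The step I expect to be the main obstacle is (ii), and inside it the control of the mixed-derivative quantity \eqref{ps.3} under the nonlinearity: this forces a fractional Leibniz rule in the \emph{time} variable together with the $D_t$--Airy commutation, all while verifying that the full H\"older accounting in $(x,t)$ --- across the five norms of $X_T^s$ and their duals --- closes with a strictly positive power of $T$ and no net derivative loss, the two hypotheses $k>4$ and $s>s_k$ being precisely what make this balance possible. The linear estimates, the abstract contraction, uniqueness and continuous dependence are then routine.
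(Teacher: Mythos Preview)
The paper does not contain a proof of this statement: Theorem~\ref{th-flp1} is quoted from \cite{FLP3} and stated without proof, the authors explaining that they ``state this result, because we will modify it to suit in our context later on.'' There is therefore nothing in the paper to compare your proposal against.

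Your outline is nonetheless a faithful sketch of the contraction-mapping scheme that underlies \cite{FLP3} (and, before it, \cite{KPV3}): Duhamel formulation, Kato smoothing and Strichartz-type linear estimates, fractional Leibniz in both $x$ and $t$, and closure via the precise choice of exponents in \eqref{ps.5}--\eqref{ps.6}. The one point I would flag as genuinely delicate---and which you correctly identify---is the handling of the mixed norm \eqref{ps.3}: the temporal fractional derivatives do not commute with the Duhamel integral as cleanly as you suggest (the relation $\partial_t e^{-t\partial_x^3}=-\partial_x^3 e^{-t\partial_x^3}$ holds for the free flow, but for the inhomogeneous term one picks up the forcing itself), so the reduction of $D_t^{\gamma_k}$ and $D_t^{\beta_k}$ to spatial derivatives requires more care than a one-line commutation. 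This is exactly why the present paper, when proving its own $H^1$ result (Theorem~\ref{th.1}), deliberately \emph{avoids} the norm \eqref{ps.3} and works instead in the space $X_T$ of \eqref{n.11}, which involves only spatial derivatives; see the discussion preceding Theorem~\ref{th.1}. So if you want a proof written out in full in this paper, look at Theorem~\ref{th.1} rather than Theorem~\ref{th-flp1}.
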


We recall that, the $L_x^2(\R)$ norm and energy are conserved by the flow of \eqref{ivp2}. More precisely, 
\begin{equation}\label{c.11}
\int_{\R} |u(x,t)|^2dx = \int_{\R} |\phi(x)|^2dx,
\end{equation}
and
\begin{equation}\label{c.12}
E(u(\cdot, t)):= \frac12\int_{\R}\{ (u_x(x,t))^2 - c_ku^{k+2}(x,t)\}dx = E(\phi),
\end{equation}
are conserved quantities.

As shown with a detailed calculations  in \cite{FLP3}, these conserved quantities yield an {\em a priori} estimate for $\|\partial_xu(t)\|_{L^2(\R)}$ if the initial data $\phi$ is sufficiently small in $H^1(\R)$. This allows to iterate the local  solution to get the global one for small data in $H^1(\R)$.  However, a numerical study carried out by Bona et. al. \cite{BDKM1, BDKM2} (see also \cite{BSS1}) revel the existence of $H^1$-data for which the corresponding solution to the supercritical KdV equation may blow-up in finite time. This is the point that motivated us to carry on this work in the light of the recent work of Abdullaev et. al. in \cite{ACKM} and Konotop and Pacciani in \cite{KP}. 

The authors in  \cite{ACKM} and \cite{KP} investigate the effect of a time oscillating term in factor of the nonlinearity in Bose-Einstein condensates. In \cite{ACKM} the authors investigate solutions which are global for large frequencies, while the authors in \cite{KP} study solutions which blow-up in
finite time. Their results are numerical. Roughly speaking, they claim that the periodic time dependent term in factor of the nonlinearity would disturb the blow-up solution, either accelerating it or delaying it. Recently, Cazenave and Scialom \cite{CS} considered the nonlinear Schr\"odinger (NLS) equation and got an analytical insight to understand the problem  by showing that the solution really depends on the frequency of the oscillating term. They proved that the solution $u$ to the IVP associated to the NLS equation 
\begin{equation}\label{cs.1}
iu_t+\Delta u +\theta(\omega t)|u|^{\alpha}u = 0, \quad x\in \R^N,
\end{equation}
where $0<\alpha<\frac4{(N-2)^+}$ is an $H^1$ sub-critical exponent and $\theta$ is a periodic function, with initial data $\phi\in H^1(\R^N)$ converges as $|\omega|\to \infty$ to the solution $U$ of the limiting equation
\begin{equation}\label{cs.2}
iU_t+\Delta U +I(\theta)|U|^{\alpha}U = 0, \quad x\in \R^N,
\end{equation}
with the same initial data, where $I(\theta)$ is the average of $\theta$. Moreover, they also showed that, if the limiting solution $U$ is global and has a certain decay property as $t \to \infty$, then $u$ is also global if $|\omega|$ is sufficiently large. A similar result has been proved for the critical KdV equation in our earlier work \cite{CPS1}.

In this work, we are interested in obtaining similar results for the supercritical KdV equation.
The numerical evidences  for the existence of blow-up solution to \eqref{ivp2} in $H^1(\R)$ due to Bona et. al. \cite{BDKM1, BDKM2} (see also \cite{BSS1}) and the discussion made above strengthen our motivation of studying \eqref{ivp1} with time oscillating nonlinearity.

As discussed above, our interest here is to investigate the behavior  in $H^1(\R)$ of the solution of the IVP \eqref{ivp1} as $|\omega| \to \infty$. The natural limiting candidate to think of is the solution to the following IVP
\begin{equation}\label{ivp3}
\begin{cases}
U_{t}+\partial_x^3U+m(g)\partial_x(U^{k+1}) =0,\\
U(x,0)=\phi(x),\qquad x,\, t\in \R,
\end{cases}
\end{equation}
where $m(g):= \frac1L\int_0^Lg(t)dt$ is the mean value of $g$ and is a real number.
To this end, we need an appropriate well-posedness result for the supercritical KdV equation in $H^1(\R)$. We recall the local well-posedness result from \cite{FLP3} for arbitrary data in $H^s(\R)$, $s>s_k$, with life span of solution depending only on the $H^s(\R)$-norm of the initial data stated in Theorem \ref{th-flp1} (See also  \cite{KPV3}). The function space used in Theorem \ref{th-flp1} has additional norm $\|D_t^{\gamma_k}D_x^{\alpha_k}D_t^{\beta_k}u\|_{L_x^{p_k}L_T^{q_k}}$ that involves time derivatives of the solution. The presence of these norms create extra difficulty to handle the time-oscillating  nonlinearity. Therefore, to deal with our case, we need to avoid the presence of the norm that involved time derivatives. Also, it is very important to have an explicit expression that gives the local existence time of the solution.  In the literature, we did not find an explicitly written proof of the $H^1(\R)$ well-posedness for the IVP \eqref{ivp2}that fulfills our requirement. Therefore, we will provide a new proof for the well-posedness of the IVP \eqref{ivp2} in $H^1(\R)$. Our proof allows us to extend the result to \eqref{ivp1.1} and as a consequence to have an estimate of the local existence time.

The only works other than \cite{CPS1} and \cite{CS} we did find in the literature that address the well-posedness issue for the equations of the KdV-family and NLS with explicitly time dependent nonlinearity were  by Nunes \cite{Nunes-1, Nunes-2} and Damergi and Goubet \cite{DG}. The authors in  \cite{DG}  deal with the NLS equation in $\R^2$ with nonlinearity $\cos^2(\Omega t)|u|^{p-1}u$ in the critical and supercritical cases. The author in \cite{Nunes-1}  considered the transitional KdV with nonlinearity $f(t) u \partial_xu$,  $f$ a continuous function such that $f'\in L_{\mbox{loc}}^1(\R)$ and proved global well-posedness in $H^s(\R)$, $s\geq 1$. The transitional KdV arises in the study of long solitary waves propagating on the thermocline separating two layers of fluids of almost equal densities in which the effect of the change in the depth of the bottom layer, which the wave feels as it approaches the shore, results in the coefficient of the nonlinear term, for details see \cite{KN}. In \cite{Nunes-2}, transitional Benjamin-Ono equation with time dependent coefficient in the nonlinearity has been considered and the main result is the global existence of the solution for data in $H^s(\R)$, $s\geq \frac32$.

Before stating the main results of this work, we define notations that will be used throughout this work.

\noindent
{\bf{Notation}}:
 We  use $\hat{f}$ to denote  the Fourier transform of $f$ and is defined  as,
\begin{equation*}
\hat{f}(\xi) = \frac{1}{(2\pi)^{1/2}}\int_{\R} e^{-ix\xi}f(x)\,dx.
\end{equation*} The $L^2$-based Sobolev space of order $s$ will be denoted  by $H^s$
with norm
\begin{equation*}
\|f\|_{H^s(\R)} = \Big(\int_{\R} (1+\xi^2)^s|\hat{f}(\xi)|^2\,d\xi\Big)^{1/2}.
\end{equation*}
The Riesz potential of order $-s$ is denoted by $D_x^s =
 (-\partial_x^2)^{s/2}$. For $f:\R\times [0, T] \to \R$ we define the mixed
 $L_x^pL_T^q$-norm by
\begin{equation*}
\|f\|_{L_x^pL_T^q} = \Big(\int_{\R}\Big(\int_0^T |f(x, t)|^q\,dt
\Big)^{p/q}\,dx\Big)^{1/p},
\end{equation*}
with usual modifications when $p = \infty$. We replace $T$ by $t$ if $[0, T]$ is the whole real line $\R$. We use the notation $f\in H^{\alpha+}$ if $f\in H^{\alpha+\epsilon}$ for $\epsilon >0$.

We define two more spaces $X_T$ and $Y_T$ with norms  
\begin{equation}\label{n.11}
\begin{split}
 \|f\|_{X_T}  :=&\|f\|_{L_T^{\infty}H^1}+ \|\partial_xf\|_{L_x^{\infty}L_T^2} +
\|\partial_x^2f\|_{L_x^{\infty}L_T^2}\\
& + \|f\|_{L_x^{5}L_T^{10}}+\|\partial_xf\|_{L_x^{5}L_T^{10}} + 
\|\partial_xf\|_{L_x^{20}L_T^{5/2}}+
\|f\|_{L_x^{4}L_T^{\infty}},
\end{split}
\end{equation}
and
\begin{equation}\label{n.21}
\|f\|_{Y_T} := \|\partial_xf\|_{L_x^2L_T^2}+ \|f\|_{L_x^2L_T^2},
\end{equation}
respectively. We replace $X_T$ by $X_t$ or $X_{(T, \infty)}$, if the time integral is taken in the interval $(0, \infty)$ or $(T, \infty)$ respectively, and similarly for  $Y_T$.

We use the letter $C$ to denote various  constants whose exact values are immaterial and which may vary from one line to the next.

First, let us state the $H^1$-local well-posedness result for the IVP \eqref{ivp2} in a function space that does not use norms involving time derivatives of the solution.

\begin{theorem}\label{th.1}
Suppose $\phi \in H^1(\R)$. Then there exist $T = T(\|\phi\|_{H^1(\R)})>0 $ and a unique solution
$u$  to the IVP \eqref{ivp2} satisfying
\begin{equation}\label{mmx.0}
u \in C([0, \,T]; H^1(\R)),
\end{equation}
\begin{equation}\label{mmx.1}
\|\partial_xu\|_{L_x^{\infty}L_T^2} +\|\partial_x^2u\|_{L_x^{\infty}L_T^2} < \infty,
\end{equation}
\begin{equation}\label{mmx.3}
\|u\|_{L_x^{5}L_T^{10}} +\|\partial_xu\|_{L_x^{5}L_T^{10}} +\|\partial_xu\|_{L_x^{20}L_T^{5/2}}< \infty, 
\end{equation}
\begin{equation}\label{mmx.5}
\|u\|_{L_x^{4}L_T^{\infty}} < \infty.
\end{equation}

Moreover, for any $T' \in (0, T)$, there exists a neighborhood
 $\mathcal{V}$ of $(u_0, v_0)$ in $H^1(\R)$ such that the map
 $\tilde{\phi}\mapsto \tilde{u}$ from
 $\mathcal{V}$ into the class defined by \eqref{mmx.0} to \eqref{mmx.5} with
 $T'$ in place of $T$ is Lipschitz.
\end{theorem}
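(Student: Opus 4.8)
The plan is to run a standard contraction-mapping argument on the Duhamel (integral) formulation of \eqref{ivp2}, but working in the norm $\|\cdot\|_{X_T}$ of \eqref{n.11} rather than in the function space of Theorem \ref{th-flp1}, precisely so that no time-derivative norms appear. Write the solution as a fixed point of
\begin{equation*}
\Psi(u)(t) = e^{-t\partial_x^3}\phi - \int_0^t e^{-(t-t')\partial_x^3}\partial_x\big(u^{k+1}(t')\big)\,dt'.
\end{equation*}
First I would collect the linear estimates for the Airy group $e^{-t\partial_x^3}$ that control each of the seven pieces of $\|\cdot\|_{X_T}$: the unitarity in $H^1$ (giving the $L_T^\infty H^1$ term), the sharp Kato smoothing estimate $\|\partial_x e^{-t\partial_x^3}\phi\|_{L_x^\infty L_t^2}\lesssim\|\phi\|_{L^2}$ (and its once-differentiated version for the $\partial_x^2$ term), the Strichartz-type estimate $\|D_x^{1/4} e^{-t\partial_x^3}\phi\|_{L_x^5 L_t^{10}}\lesssim\|\phi\|_{L^2}$, the maximal-function estimate $\|e^{-t\partial_x^3}\phi\|_{L_x^4 L_t^\infty}\lesssim\|D_x^{1/4}\phi\|_{L^2}$ (Kenig--Ponce--Vega), and the corresponding inhomogeneous / retarded versions obtained by combining these with the dual smoothing estimate and the Christ--Kiselev lemma. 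These are exactly the estimates used in \cite{KPV3} and \cite{FLP3}; the only change is the bookkeeping of which mixed norms I carry, so I would state them as a lemma and cite those sources.

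The core of the argument is the nonlinear estimate: I need to show
\begin{equation*}
\Big\|\int_0^t e^{-(t-t')\partial_x^3}\partial_x\big(u^{k+1}\big)\,dt'\Big\|_{X_T}\le C\,T^{\theta}\,\|u\|_{X_T}^{k+1}
\end{equation*}
for some $\theta>0$, and similarly a Lipschitz bound for the difference $u^{k+1}-v^{k+1}$. Here one writes $\partial_x(u^{k+1})=(k+1)u^k\partial_x u$, distributes $k$ factors of $u$ into the $L_x^4 L_T^\infty$ norm and the remaining $\partial_x u$ into an appropriate smoothing/Strichartz norm, using Hölder in both $x$ and $t$; the exponents in \eqref{n.11} (the $L_x^{20}L_T^{5/2}$ and $L_x^4 L_T^\infty$ pieces in particular) are chosen exactly so that $4/k + \text{(something)} $ closes and a positive power of $T$ is extracted from the time integration (this is where $k\ge 5$, i.e. supercriticality making the scaling favorable at the $H^1$ level, is used — a genuine power $T^{\theta}$, not merely $T^0$, appears). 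The factor $T^{\theta}$ is what gives the quantitative existence time $T=T(\|\phi\|_{H^1})$ with $T\to\infty$ as the norm shrinks, which is the feature we need for the $\omega$-limit analysis later.

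Granting the linear and nonlinear estimates, the rest is routine: choose $R=2C\|\phi\|_{H^1}$ (or the relevant multiple), pick $T$ small enough that $CT^{\theta}R^{k}<\tfrac12$, and check that $\Psi$ maps the ball $\{\|u\|_{X_T}\le R\}$ into itself and is a contraction there; the fixed point is the desired solution, and $u\in C([0,T];H^1)$ follows from continuity of the Duhamel terms in $t$ with values in $H^1$. Uniqueness in the full ball is immediate from the contraction; uniqueness in $C([0,T];H^1)$ alone would require the usual additional argument (either a priori every such solution lies in $X_{T'}$ for small $T'$, or a Gronwall-type uniqueness), which I would handle exactly as in \cite{KPV3}. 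The Lipschitz dependence of $\tilde\phi\mapsto\tilde u$ on a neighborhood $\mathcal V$ for any $T'<T$ comes from the same difference estimate applied on $[0,T']$ together with the continuity of $T(\cdot)$ in the data. I expect the main obstacle to be purely technical: finding the Hölder split of the $k+1$ factors that simultaneously closes \emph{all seven} norms in \eqref{n.11} while producing a uniform positive power of $T$ — in particular controlling the highest-derivative pieces $\|\partial_x^2 u\|_{L_x^\infty L_T^2}$ and $\|\partial_x u\|_{L_x^5 L_T^{10}}$, which force at least one derivative to land on a factor of $u$ and thus require the derivative-free norm $\|u\|_{L_x^4 L_T^\infty}$ (hence the need for the maximal-function estimate) to absorb the other $k$ factors.
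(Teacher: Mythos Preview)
Your plan is correct and matches the paper's strategy: contraction mapping for the Duhamel map in the ball $\{\|u\|_{X_T}\le 2C_0\|\phi\|_{H^1}\}$, with a small power of $T$ furnishing smallness. Two points where the paper's execution differs from your sketch are worth noting.

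First, the paper does \emph{not} close each of the seven $X_T$-norms against the nonlinearity separately. Instead it factors the inhomogeneous estimate through the auxiliary space $Y_T$ of \eqref{n.21} (just $L^2_xL^2_T$ of $f$ and $\partial_x f$): Lemma~\ref{lema2.4} gives the single bound
\[
\Big\|\int_0^t S(t-t')f(t')\,dt'\Big\|_{X_T}\le C\,T^{1/2}\|f\|_{Y_T},
\]
and then Lemma~\ref{lema2.5} gives $\|\partial_x(u^{k+1})\|_{Y_T}\le C\|u\|_{X_T}^{k+1}$. This collapses your anticipated ``main obstacle'' (seven simultaneous H\"older splits) into two short estimates; the power of $T$ is always $\theta=1/2$ and comes from the $Y_T$ routing, not from the particular value of $k$.

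Second, your proposed H\"older split---putting all $k$ undifferentiated factors into $L^4_xL^\infty_T$---does not close for $k\ge5$ (the $x$-exponents do not add up). The paper instead uses the embedding $H^1(\R)\hookrightarrow L^\infty(\R)$ to park $k-2$ of the factors in $\|u\|_{L^\infty_T H^1}^{k-2}$ and only two factors in $\|u\|_{L^4_xL^\infty_T}$; see \eqref{eq2.13}--\eqref{eq2.14}. With that adjustment your argument goes through, and the remaining steps (choice of $a$, $T$, Lipschitz dependence) are exactly as you describe.
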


 Using Duhamel's principle, we prove Theorem \ref{th.1} by considering  the
integral equation associated to the IVP \eqref{ivp2},
\begin{equation}\label{equint1}
u(t)= S(t)\phi - \int_{0}^{t}S(t-t')\partial_x(u^{k+1})(t')\,dt',
\end{equation}
where $S(t)$ is the unitary group generated by the operator $\partial_x^3$ that describes the solution to the linear problem. Our interest is to solve \eqref{equint1} using the contraction mapping principle in appropriate metric spaces.

\begin{remark}
Since the average $m(g)$ of $g$ is a constant, the proof of Theorem \ref{th.1} can be adapted line by line to obtain the similar well-posedness result for the IVP \eqref{ivp3}. The only difference in this case is that, to complete the contraction argument we need to choose $T>0$ in such a way that $C|m(g)|T^{1/2}\|\phi\|_{H^1(\R)}^4<\frac12$.  So the existence time $T$ depends on $|m(g)|$ and $\|\phi\|_{H^1(\R)}$. We also have the following bound
\begin{equation}\label{pr.2}
\|U\|_{X_T}\leq C\|\phi\|_{H^1(\R)}, \quad \forall\;\, t\in [0, T].
\end{equation}

\end{remark}

Regarding the well-posedness results for the IVP \eqref{ivp1.1}, we have the following theorem.

\begin{theorem}\label{th.2}
Suppose $\phi \in H^1(\R)$. Then there exist $T = T(\|\phi\|_{H^1(\R)}, \|g\|_{L^{\infty}})>0 $ and a unique solution
$u_{\omega, t_0}\in C([0, \,T]; H^1(\R))$  to the IVP \eqref{ivp1.1} satisfying \eqref{mmx.1}--\eqref{mmx.5}.

Moreover, for any $T' \in (0, T)$, there exists a neighborhood
 $\mathcal{V}$ of $\phi$ in $H^1(\R)$ such that the map
 $\tilde{\phi}\mapsto \tilde{u}_{\omega, t_0}$ from
 $\mathcal{V}$ into the class defined by \eqref{mmx.0} to \eqref{mmx.5} with
 $T'$ in place of $T$ is Lipschitz.
\end{theorem}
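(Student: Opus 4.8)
\medskip

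\noindent\textbf{Proof proposal for Theorem \ref{th.2}.}
The plan is to run, essentially verbatim, the contraction-mapping scheme that proves Theorem \ref{th.1}, the only new ingredient being the time-dependent coefficient $g(\omega(t+t_0))$ in front of the nonlinearity. By Duhamel's principle, a solution of \eqref{ivp1.1} on $[0,T]$ is a fixed point of the map
\[
\Psi(u)(t):= S(t)\phi - \int_0^t S(t-t')\, g\big(\omega(t'+t_0)\big)\,\partial_x\big(u^{k+1}\big)(t')\,dt',
\]
and I would look for this fixed point in a closed ball of the space $X_T$ defined in \eqref{n.11}. The linear estimates controlling $\|S(t)\phi\|_{X_T}$, together with the Kato smoothing, Strichartz and maximal-function estimates for the inhomogeneous (Duhamel) term used in the proof of Theorem \ref{th.1}, make no reference to the coefficient and are imported unchanged.

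The observation that makes this transfer immediate is that, being continuous and periodic, $g$ belongs to $L^\infty(\R)$, so that
\[
\big|\,g(\omega(t'+t_0))\,\partial_x(u^{k+1})(x,t')\,\big|\ \le\ \|g\|_{L^\infty}\,\big|\,\partial_x(u^{k+1})(x,t')\,\big|
\]
pointwise in $(x,t')$, with a bound that is uniform in $\omega$ and in $t_0$. Since the coefficient enters the Duhamel term only as such a pointwise multiplier, and all the norms used to estimate that term are mixed Lebesgue norms $L_x^pL_T^q$ (together with $L_T^\infty H^1$), each of the nonlinear estimates in the proof of Theorem \ref{th.1} persists with a single extra, harmless factor $\|g\|_{L^\infty}$. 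One is thus led, with $C=C(k)$, to bounds of the form
\[
\|\Psi(u)\|_{X_T}\ \le\ C\|\phi\|_{H^1(\R)}\ +\ C\,\|g\|_{L^\infty}\,T^{1/2}\,P\big(\|u\|_{X_T}\big),
\]
together with the companion Lipschitz estimate for $\|\Psi(u)-\Psi(v)\|_{X_T}$, where $P$ is the superlinear expression furnished by the nonlinear estimate of Theorem \ref{th.1}. Choosing $T=T(\|\phi\|_{H^1(\R)},\|g\|_{L^\infty})>0$ so that the smallness condition in the Remark following Theorem \ref{th.1} holds with $\|g\|_{L^\infty}$ replacing $|m(g)|$ --- that is, $C\,\|g\|_{L^\infty}\,T^{1/2}\,\|\phi\|_{H^1(\R)}^{4}<\frac12$, which is admissible since $|m(g)|\le\|g\|_{L^\infty}$ --- makes $\Psi$ a contraction on the ball $\{\,u\in X_T:\ \|u\|_{X_T}\le 2C\|\phi\|_{H^1(\R)}\,\}$. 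Its unique fixed point $u_{\omega,t_0}$ then lies in $C([0,T];H^1(\R))$ and satisfies \eqref{mmx.1}--\eqref{mmx.5}, while uniqueness in the whole class (by the usual connectedness argument on $[0,T]$) and the Lipschitz dependence on $\phi$ over subintervals $[0,T']\subset[0,T)$ follow from the same contraction estimates exactly as in Theorem \ref{th.1}.

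The single step that I would check with some care --- and it is bookkeeping rather than genuine analysis --- is that the proof of Theorem \ref{th.1} never integrates by parts in $t'$ against the Duhamel kernel: such a manoeuvre would differentiate the coefficient and bring down a factor $\omega\,g'$, ruining the uniformity in $\omega$. This is precisely the reason the function space of Theorem \ref{th.1} was designed to avoid norms involving time derivatives; in that space all the nonlinear estimates are of the direct mixed-norm type above, no derivative ever falls on $g$, and the scheme closes with an existence time $T$ that depends only on $\|\phi\|_{H^1(\R)}$ and $\|g\|_{L^\infty}$ --- in particular not on $\omega$ or $t_0$ individually, which is exactly the feature needed for the convergence analysis carried out later.
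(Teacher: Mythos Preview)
Your proposal is correct and follows essentially the same route as the paper: both arguments run the contraction scheme of Theorem \ref{th.1} verbatim, pull out the bounded periodic coefficient $g$ as a single factor $\|g\|_{L^\infty}$ in the $Y_T$-estimates (which is exactly what the paper means by ``the norms involved in the space $Y$ permit us to take out $\|g\|_{L_t^\infty}$-norm as a coefficient''), and then choose $T$ so that $C\|g\|_{L^\infty}T^{1/2}a^{k}<\tfrac12$ to close the contraction. Your additional remark about the absence of $t'$-integration by parts is a correct and pertinent explanation of why the space $X_T$ was engineered without time-derivative norms, though the paper itself does not spell this out in the proof of Theorem \ref{th.2}.
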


Now, we state the main results of this work.
\begin{theorem}\label{main.th1}
Fix $\phi \in H^1(\R)$. For given $\omega, t_0 \in \R$, let $u_{\omega, t_0}$ be the maximal solution of the IVP \eqref{ivp1.1} and $U$ be the solution of the limiting IVP \eqref{ivp3} defined on the maximal time of existence $[0, S_{\max})$. Then, for given any $0<T<S_{\max}$, the solution $u_{\omega, t_0}$ exists on $[0, T]$ for all $t_0\in \R$ and $|\omega|$ large. Moreover, $\|u_{\omega, t_0}-U\|_{X_T} \to 0$, as $|\omega|\to \infty$, uniformly in $t_0\in \R$. In particular, the convergence holds in $C([0, T]; H^1(\R))$ for all $T\in (0, S_{max})$.
\end{theorem}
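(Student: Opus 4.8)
The plan is to compare the Duhamel formulations of $u_{\omega,t_0}$ and $U$: their difference is driven by a genuinely oscillatory term that averages to zero as $|\omega|\to\infty$, together with a nonlinear remainder that is absorbed by the local theory on short time intervals.

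\emph{Set-up.} Fix $T\in(0,S_{\max})$. Since $U\in C([0,S_{\max});H^1(\R))$, the quantity $\|U(t)\|_{H^1}$ is bounded on $[0,T]$; as the local theory for \eqref{ivp3} (the Remark after Theorem~\ref{th.1}, together with \eqref{pr.2}) has existence time depending only on this norm, covering $[0,T]$ by finitely many short intervals gives $M:=\|U\|_{X_T}<\infty$ (and $\|\phi\|_{H^1}\le M$). On the interval of existence of $u_{\omega,t_0}$, subtracting the two integral equations and adding and subtracting $\int_0^tS(t-t')g(\omega(t'+t_0))\partial_x(U^{k+1})\,dt'$ yields, with $w:=u_{\omega,t_0}-U$,
\begin{equation*}
w(t)= -\int_0^t S(t-t')\bigl[g(\omega(t'+t_0))-m(g)\bigr]\partial_x(U^{k+1})(t')\,dt' -\int_0^t S(t-t')\,g(\omega(t'+t_0))\,\partial_x\bigl(u_{\omega,t_0}^{k+1}-U^{k+1}\bigr)(t')\,dt'.
\end{equation*}
The second term is estimated exactly as in the contraction proving Theorems~\ref{th.1} and \ref{th.2}: writing $u^{k+1}-v^{k+1}=(u-v)\sum_{\ell=0}^{k}u^{k-\ell}v^{\ell}$ and using $|g|\le\|g\|_{L^\infty}$, on any subinterval $J\subset[0,T]$ of length $|J|$ it is bounded in $X_J$ by $C\|g\|_{L^\infty}|J|^{1/2}\bigl(\|u_{\omega,t_0}\|_{X_J}+\|U\|_{X_J}\bigr)^{k}\|w\|_{X_J}$.

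\emph{The averaging estimate (the heart of the proof).} One must show that
\begin{equation*}
\Psi_{\omega,t_0}(F)(t):=\int_0^t S(t-t')\bigl[g(\omega(t'+t_0))-m(g)\bigr]\partial_x F(t')\,dt'
\end{equation*}
satisfies $\|\Psi_{\omega,t_0}(F)\|_{X_T}\to 0$ as $|\omega|\to\infty$, uniformly in $t_0\in\R$, for $F=U^{k+1}$; note $F\in C([0,T];H^1)$ with its mixed space--time norms controlled by $\|U\|_{X_T}$, since $H^1(\R)$ is an algebra. First, because $\|g(\omega(\cdot+t_0))-m(g)\|_{L^\infty}\le 2\|g\|_{L^\infty}$, the linear estimates (Strichartz, Kato smoothing, maximal function) underlying Theorem~\ref{th.1} show $F\mapsto\Psi_{\omega,t_0}(F)$ is bounded into $X_T$ with a bound independent of $\omega$ and $t_0$. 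Second, since $g-m(g)$ is $L$-periodic with zero mean, its primitive $H(y):=\int_0^y\bigl(g(\sigma)-m(g)\bigr)\,d\sigma$ is $L$-periodic, hence bounded, so
\begin{equation*}
\mathcal G_{\omega,t_0}(s):=\int_0^s\bigl[g(\omega(\sigma+t_0))-m(g)\bigr]\,d\sigma=\frac{1}{\omega}\bigl[H(\omega(s+t_0))-H(\omega t_0)\bigr]
\end{equation*}
obeys $\|\mathcal G_{\omega,t_0}\|_{L^\infty}\le C|\omega|^{-1}$, uniformly in $t_0$. Integrating by parts in $t'$ (with $g-m(g)=\partial_{t'}\mathcal G_{\omega,t_0}$) produces a boundary term $\mathcal G_{\omega,t_0}(t)\,\partial_xF(t)$ and an integral of $\mathcal G_{\omega,t_0}(t')\,\partial_{t'}\bigl[S(t-t')\partial_xF(t')\bigr]$; since $\partial_{t'}\bigl[S(t-t')\partial_xF\bigr]$ brings down $\partial_x^4F$ and $\partial_xF_{t'}$, which for $F$ smooth and Schwartz in $x$ are controlled in $X_T$ by some finite $C(F)$, we get $\|\Psi_{\omega,t_0}(F)\|_{X_T}\le C(F)|\omega|^{-1}$, uniformly in $t_0$. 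Approximating $U^{k+1}$ by such smooth $F$ and combining with the uniform operator bound yields the claim by a standard $\varepsilon/3$ argument, the threshold on $|\omega|$ depending on the approximant but not on $t_0$. \emph{This is the main obstacle}: the integration by parts is legitimate only after the density reduction, since $\partial_{t'}[S(t-t')\partial_xF]$ costs four $x$-derivatives on $F=U^{k+1}$, far more than $U\in H^1$ affords --- which is precisely why the time-derivative norms of Theorem~\ref{th-flp1} had to be replaced by those of $X_T$, and why the uniform-in-$\omega$ boundedness of $\Psi_{\omega,t_0}$ on $X_T$ must be secured separately.

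\emph{Bootstrap and patching.} Choose $0=a_0<a_1<\dots<a_N=T$ with mesh small enough that $C\|g\|_{L^\infty}(a_{j+1}-a_j)^{1/2}(2M+1)^{k}\le\frac12$ for all $j$, and also below the ($\omega$- and $t_0$-independent) local existence time $\tau_0:=T(M+1,\|g\|_{L^\infty})$ given by Theorem~\ref{th.2}. One proves by induction on $j$ that, for $|\omega|$ large (depending on $M,T,\|g\|_{L^\infty},k$, not on $t_0$), $u_{\omega,t_0}$ exists on $[0,a_j]$ and $\|w\|_{X_{[0,a_j]}}\le\delta_j(\omega)$, where $\delta_0=0$, $\delta_{j+1}(\omega)=(1+2C)\delta_j(\omega)+2\eta(\omega)$, and $\eta(\omega)$ is the maximum over $j$ of the $X_{[a_j,a_{j+1}]}$-norm of the averaging term on $[a_j,a_{j+1}]$ (which, by the same oscillation estimate applied on each subinterval, tends to $0$ as $|\omega|\to\infty$, uniformly in $t_0$). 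Indeed, if this holds at $j$ then $\|u_{\omega,t_0}(a_j)\|_{H^1}\le M+\delta_j(\omega)\le M+1$, so Theorem~\ref{th.2} extends the solution past $a_{j+1}$; restarting Duhamel at $a_j$, taking $X_{[a_j,a_{j+1}]}$ norms in the difference equation, bounding the free evolution $S(\cdot-a_j)w(a_j)$ by $C\|w(a_j)\|_{H^1}\le C\|w\|_{X_{[0,a_j]}}$, using the averaging estimate for the first term and the mesh condition to absorb $\frac12\|w\|_{X_{[a_j,a_{j+1}]}}$ from the second, and adding $\|w\|_{X_{[0,a_j]}}$ (the $X$-norm is subadditive over adjacent time subintervals), gives $\|w\|_{X_{[0,a_{j+1}]}}\le\delta_{j+1}(\omega)$. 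Since $\delta_N(\omega)\le C_N\,\eta(\omega)\to 0$ with $C_N$ depending only on $N$ and $C$, for $|\omega|$ large every $\delta_j(\omega)\le 1$ (so the induction closes and $u_{\omega,t_0}$ reaches $T$) and $\|u_{\omega,t_0}-U\|_{X_T}=\|w\|_{X_T}\le\delta_N(\omega)\to 0$, uniformly in $t_0$. As $\|\cdot\|_{L_T^\infty H^1}\le\|\cdot\|_{X_T}$, this also gives convergence in $C([0,T];H^1(\R))$.
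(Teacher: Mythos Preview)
Your proof is correct and follows essentially the same strategy as the paper: decompose $u_{\omega,t_0}-U$ into an oscillatory term (your $\Psi_{\omega,t_0}(U^{k+1})$, handled by the averaging result you sketch via integration by parts and density, which is exactly the paper's Lemma~\ref{lem.1}) and a nonlinear remainder, then iterate the local theory of Theorem~\ref{th.2} over short subintervals of $[0,T]$. The only organizational difference is that the paper packages the convergence on each subinterval as Lemma~\ref{lem.2} and closes it with the Gronwall-type Lemma~\ref{apn.1} (bounding $\|w\|_{L_T^\infty H^1}$ by $\|w\|_{L_T^2 H^1}$ plus $C_\omega$), whereas you absorb the remainder directly using the $|J|^{1/2}$ factor from the contraction estimate~\eqref{eq2.5}; this is a minor reorganization rather than a different argument.
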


\begin{theorem}\label{main.th2}
Let  $\phi \in H^1(\R)$ and $u_{\omega, t_0}$ be the maximal solution of the IVP \eqref{ivp1}. Suppose $U$ be the maximal solution of the IVP \eqref{ivp3} defined on $[0, S_{max})$. If $S_{max}=\infty$ and
\begin{equation}\label{eq1.20}
\|U\|_{L_x^{5}L_t^{10}} < \infty,
\end{equation}
then it follows that $u_{\omega, t_0}$ is global for all $t_0 \in \mathbb{R}$ if $|w|$ is sufficiently large. Moreover,
\begin{equation}\label{eq1.19}
\|u_{\omega, t_0}-U\|_{X_t} \to 0, \quad \textrm{when}\quad |w| \to \infty,
\end{equation}
uniformly in $t_0$. In particular, convergence holds in $L^{\infty}((0,\infty); H^1(\R))$.
\end{theorem}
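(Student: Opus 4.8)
The plan is to promote the compact-interval convergence of Theorem~\ref{main.th1} to the whole half-line by a continuity argument whose engine is an a priori bound for $U$ in the \emph{global} norm $X_{(0,\infty)}$. First I would observe that hypothesis \eqref{eq1.20} says $\big(\int_0^\infty|U(x,t)|^{10}\,dt\big)^{1/2}\in L^1_x$, so by dominated convergence $\|U\|_{L_x^5L_{(T_0,\infty)}^{10}}\to0$ as $T_0\to\infty$. Fix $T_0$ so large that this tail is below the threshold $\varepsilon_0$ needed to close the nonlinear estimates behind Theorem~\ref{th.1} in the form in which the small factor is an $L_x^5L^{10}$-type norm of the solution rather than a power of the length of the time interval; solving \eqref{ivp3} forward from $T_0$ and running that estimate on $(T_0,\infty)$ then gives, by a routine bootstrap, $\|U\|_{X_{(T_0,\infty)}}\le C\|U(T_0)\|_{H^1}<\infty$. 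Since $\|U\|_{X_{T_0}}<\infty$ by the remark following Theorem~\ref{th.1} (applied on finitely many subintervals of $[0,T_0]$), we obtain $M:=\|U\|_{X_{(0,\infty)}}<\infty$.

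\textbf{Step 2 (head and tail, and the perturbation estimate).}
On the fixed finite interval $[0,T_0]$, Theorem~\ref{main.th1} already gives that $u_{\omega,t_0}$ exists on $[0,T_0]$ for $|\omega|$ large (uniformly in $t_0$) with $\|u_{\omega,t_0}-U\|_{X_{T_0}}\to0$; in particular $\delta(\omega):=\|u_{\omega,t_0}(T_0)-U(T_0)\|_{H^1}\to0$. On $[T_0,\infty)$ I set $w=u_{\omega,t_0}-U$, which by Duhamel's formula satisfies
\[
w(t)=S(t-T_0)w(T_0)-\int_{T_0}^tS(t-t')g(\omega(t'+t_0))\,\partial_x\big(u_{\omega,t_0}^{k+1}-U^{k+1}\big)(t')\,dt'-\int_{T_0}^tS(t-t')\big[g(\omega(t'+t_0))-m(g)\big]\partial_x(U^{k+1})(t')\,dt'.
\]
The linear term is bounded in $X_{(T_0,\infty)}$ by $C\delta(\omega)$. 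In the first integral, since $\|U\|_{L_x^5L_{(T_0,\infty)}^{10}}\le\varepsilon_0$ and, while the bootstrap holds, $\|u_{\omega,t_0}\|_{L_x^5L_{(T_0,\infty)}^{10}}\le 2\varepsilon_0$, the nonlinear estimates of Theorem~\ref{th.1} place enough factors in small Strichartz-type norms that this term is bounded by $C\varepsilon_0^{\theta}\|w\|_{X_{(T_0,\infty)}}+C(M)\|w\|_{X_{(T_0,\infty)}}^{1+\theta}$ for some $\theta>0$, with constants depending only on $M$ and hence uniform in the length of the interval. Writing $\mathcal{E}(\omega)$ for the bound of the oscillation error (the last integral, treated in Step~3), we get on every $[T_0,T]$ on which $u_{\omega,t_0}$ exists and stays bounded in $X$
\[
\|w\|_{X_{(T_0,T)}}\le C(M)\,\delta(\omega)+C(M)\,\mathcal{E}(\omega)+C(M)\,\|w\|_{X_{(T_0,T)}}^{1+\theta}+C\,\varepsilon_0^{\theta}\,\|w\|_{X_{(T_0,T)}}.
\]

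\textbf{Step 3 (the oscillation error, globality, convergence).}
To estimate the oscillation error, write $g(\omega s)-m(g)=\frac{d}{ds}\big[\frac1\omega G(\omega s)\big]$ with $G$ the bounded periodic primitive of the mean-zero function $g-m(g)$, and integrate by parts in $t'$: one picks up boundary terms carrying $|\omega|^{-1}$ and an integral in which $\frac{d}{dt'}$ lands on $S(t-t')\partial_x(U^{k+1})(t')$. Using the equation $U_t=-\partial_x^3U-m(g)\partial_x(U^{k+1})$ to trade the time derivative for spatial derivatives and then the Kato smoothing and Strichartz estimates for the Airy group, every resulting term is bounded by $C(M)\,|\omega|^{-1}$, uniformly in $t_0$ (since $\|G\|_{L^\infty}$ is insensitive to $t_0$) and uniformly in $T\le\infty$; thus $\mathcal{E}(\omega)\le C(M)|\omega|^{-1}$. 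Now choose $\varepsilon_0$ and a threshold $\rho=\rho(M)>0$ small enough that $C\varepsilon_0^{\theta}\le\frac14$ and $C(M)\rho^{\theta}\le\frac14$; while $\|w\|_{X_{(T_0,T)}}\le\rho$ the last two terms in the displayed inequality are absorbed, giving $\|w\|_{X_{(T_0,T)}}\le 2C(M)\big(\delta(\omega)+|\omega|^{-1}\big)$, which is $\le\rho$ once $|\omega|$ is large. A standard open--closed argument now applies: existence of $u_{\omega,t_0}$ past any such $T$ follows from Theorem~\ref{th.2}, because $\|u_{\omega,t_0}(t)\|_{H^1}\le M+\rho$ on the interval where the bound holds, so $T$ cannot be maximal with $\|w\|_{X_{(T_0,T)}}=\rho$. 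Hence $u_{\omega,t_0}$ is global and $\|w\|_{X_{(T_0,\infty)}}\le 2C(M)\big(\delta(\omega)+|\omega|^{-1}\big)\to0$, uniformly in $t_0$. Combined with $\|u_{\omega,t_0}-U\|_{X_{T_0}}\to0$ from the head, this yields $\|u_{\omega,t_0}-U\|_{X_{(0,\infty)}}\to0$ uniformly in $t_0$; since $X_{(0,\infty)}$ dominates the $L^\infty((0,\infty);H^1)$ norm, this is exactly \eqref{eq1.19} and the final assertion, while $\sup_{t\ge0}\|u_{\omega,t_0}(t)\|_{H^1}\le M+\rho$ gives globality for all $t_0$.

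\textbf{Main obstacle.}
The delicate point is the oscillation error estimate of Step~3 being uniform in $T$ up to $T=\infty$: the integration by parts that extracts the gain $|\omega|^{-1}$ generates spatial derivatives of $U^{k+1}$ of higher order than those carried by $X_T$, so one must redistribute them by hand using the smoothing properties of $S(\cdot)$ and then close everything against the single finite quantity $M=\|U\|_{X_{(0,\infty)}}$, checking that no constant silently depends on $T$. Everything else — Step~1's partitioning and the open--closed argument in Step~3 — is routine once this estimate is available.
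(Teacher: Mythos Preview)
Your head/tail split and the use of Theorem~\ref{main.th1} on $[0,T_0]$ are exactly right, and your perturbation inequality in Step~2 is essentially what is needed. The genuine gap is in Step~3: the integration-by-parts argument for the oscillation error cannot be closed at the $H^1$ level. After writing $g(\omega s)-m(g)=\frac{d}{ds}\big[\tfrac{1}{\omega}G(\omega s)\big]$ and integrating by parts, the boundary term at $t'=t$ is $\tfrac{1}{\omega}G(\omega(t+t_0))\,\partial_x(U^{k+1})(t)$; to put this in $X_{(T_0,\infty)}$ you must in particular bound its $\|\partial_x^2(\cdot)\|_{L_x^\infty L_t^2}$ norm, which forces control of $\partial_x^3(U^{k+1})$ and hence of $U_{xxx}$. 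The bulk term is worse: $\partial_{t'}\big[S(t-t')\partial_x(U^{k+1})\big]=S(t-t')\big[\partial_x^4(U^{k+1})+\partial_x\partial_{t'}(U^{k+1})\big]$, and using the equation $U_t=-\partial_x^3U-m(g)\partial_x(U^{k+1})$ only trades $\partial_{t'}$ for further third-order spatial derivatives of $U$. Kato smoothing recovers at most two derivatives through the Duhamel integral (this is \eqref{eq2.3}), so you are still short by at least one derivative, and no amount of redistribution against $M=\|U\|_{X_{(0,\infty)}}$ will produce $U_{xxx}$. The obstacle you flag is therefore not merely ``delicate''; it is an actual loss that cannot be repaired with $\phi\in H^1$.

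The paper sidesteps this entirely by \emph{not} asking the tail term to vanish with $|\omega|$. On $(T,\infty)$ it writes
\[
u_{\omega,t_0}(T+t)-U(T+t)=S(t)\big(u_{\omega,t_0}(T)-U(T)\big)-\int_0^t S(t-t')g(\cdot)\partial_x(u_{\omega,t_0}^{k+1})\,dt'+m(g)\int_0^t S(t-t')\partial_x(U^{k+1})\,dt',
\]
keeping the two Duhamel integrals \emph{separate} rather than forming their difference. Each is then bounded, via estimates of the type \eqref{xav27}, by $CA\|\,\cdot\,\|_{L_x^5L_{(T,\infty)}^{10}}^{4}\|\,\cdot\,\|_{X_{(T,\infty)}}\le CA(2\epsilon)^4 B\,M_j$, which is small \emph{uniformly in $\omega$} once $T$ is chosen large enough to make $\|U\|_{L_x^5L_{(T,\infty)}^{10}}$ (and, by Proposition~\ref{xavprop1} and Corollary~\ref{xavcor}, also $\|u_{\omega,t_0}\|_{L_x^5L_{(T,\infty)}^{10}}$) below $\epsilon$. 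The only term that needs $|\omega|\to\infty$ is the linear piece $S(t)\big(u_{\omega,t_0}(T)-U(T)\big)$, handled by Theorem~\ref{main.th1}. In your framework this amounts to replacing the failed bound $\mathcal{E}(\omega)\le C(M)|\omega|^{-1}$ by the trivially available bound $\mathcal{E}(\omega)\le C(A)\,\varepsilon_0^{4}\,M$, and then letting $T_0$ (hence $\varepsilon_0$) depend on the target precision $\beta$ before sending $|\omega|\to\infty$.
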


In view of the numerical prediction in \cite{BDKM1, BDKM2} (see also \cite{BSS1})  of existence of blow-up solution for the supercritical KdV equation for in $H^1(\R)$, the Theorem \ref{main.th2} is very interesting in the sense that when $m(g)=0$ the solution $U$ to the IVP \eqref{ivp3} will be global for all initial $H^1$-data and the  solution $u_{\omega, t_0}$ to the nonlinear problem \eqref{ivp1.1} will be global too, for $|\omega|$ large enough.
 
 Before leaving this section, we discuss the example constructed in \cite{CS} in the context of the NLS equation with time oscillating nonlinearity. The authors in \cite{CS} showed that for small frequency $|\omega|$, the solution $u_{\omega, t_0}$ blows-up in finite time or is global depending on $t_0$, while for the large frequency $|\omega|$, the solution $u_{\omega, t_0}$  is global for all $t_0\in \R$. The same example can be utilized with small modification  in the context of the critical KdV equation. We present it here for the convenience of the readers.

 \begin{example}
 Let $L>1$,  $0<\epsilon < \frac{L-1}2$ and consider a periodic function $g$ defined by
 \begin{equation}\label{example}
 m(g)=0, \quad\mbox{and}\quad g(s) = \begin{cases} 1, \quad |s|\leq \epsilon,\\
                                                   0, \quad 1\leq s \leq 1+\epsilon,
                                      \end{cases}
\end{equation}
 with period $L$.

 Fix $\phi \in H^1(\R)$ and assume that the solution $v$ of the IVP
 \begin{equation}\label{x-ivp}
 \begin{cases}
 v_t+v_{xxx}+v^{k+1}\partial_xv=0, \qquad k\geq 5,\\
 v(x,0) = \phi(x),
 \end{cases}
 \end{equation}
 blows-up in finite time, say $T^*$. In the light of the numerical evidences presented in \cite{BDKM1, BDKM2} (see also \cite{BSS1}) we can suppose that  such a solution $v(x,t)$ of \eqref{x-ivp} with $t\in [0, T^*)$, exists.

 From Theorem \ref{main.th1}, for this particular $\phi$ and the periodic function $g$,  we have that the solution $u_{\omega, t_0}$ to the IVP \eqref{ivp1.1} converges, as $|\omega|\to \infty$, to the solution $U$ of the linear KdV equation with same initial data $\phi$. So, in view of Theorem \ref{main.th2}, $u_{\omega, t_0}$ is global as $|\omega|\to \infty$ for all $t_0\in \R$.

Now we move to analyze the behavior of the solution for $|\omega|$ small. Note that $g(\omega s) =1$ when $|\omega s|\leq \epsilon$. Therefore, if we consider $|\omega| < \frac{\epsilon}{T^*}$, then we see that the solution $v$ to the IVP \eqref{x-ivp} satisfies \eqref{ivp1.1} for $t_0=0$ on $[0, T^*)$. By uniqueness, $u_{\omega, 0} =v$. Hence the solution $u_{\omega, 0}$ of the IVP \eqref{ivp1.1} blows-up in finite time, provided $|\omega|<\frac{\epsilon}{T^*}$.

Let $\epsilon = \epsilon(A)$ be as in Corollary \ref{xavcor} with $A=\|g\|_{L_t^{\infty}}$. From the linear estimate \eqref{241} we have that $S(\cdot)\phi\in L_x^5L_t^{10}$, so there exists $T>0$ such that
\begin{equation}\label{em.1}
\|S(\cdot)[S(T)\phi]\|_{L_x^5L_t^{10}} = \|S(\cdot)\phi\|_{L_x^5L_{(T, \infty)}^{10}} \leq \epsilon.
\end{equation}

For $\omega > 0$, if we consider $t_0 =\frac1\omega$, we have that $g(\omega(s+t_0)) = 0$ for all $1\leq \omega(s+t_0)\leq 1+\epsilon$, i.e., for all $0\leq s\leq \frac\epsilon{\omega}$. Therefore, if we let $\omega> 0$ satisfying $\omega\leq\frac\epsilon T$ (i.e., $T\leq\frac\epsilon{\omega}$), and choose $t_0 =\frac1\omega$, then $g(\omega(s+t_0)) = 0$ for all $0\leq s\leq T$. So, with this choice, $u_{\omega, t_0}$ solves the linear KdV equation if $0\leq t\leq T$. Therefore, for  $\omega\leq\frac\epsilon T$, $u_{\omega, t_0}$ exists on $[0, T]$ and is given by $S(t)\phi$, in particular $u_{\omega, t_0}(T)=S(T)\phi$. From \eqref{em.1}, $\|S(\cdot)u_{\omega, t_0}(T)\|_{L_x^5L_t^{10}} \leq \epsilon$. Hence, from Corollary \ref{xavcor} we  conclude that $u_{\omega, t_0}$ is global.
\end{example}

This paper is organized as follows. In Section \ref{linear1} we record some
preliminary estimates associated to the linear problem and other relevant
results. In Section \ref{prmt} we give a proof of the local well-posedness
result for the supercritical KdV equation in  $H^1(\R)$ and some other results that will be used in the proof of the main Theorems. Finally, the proof
of the main results  will be given in Section \ref{prmt-2}.

%%%%%%%%%%%%%%%%%%%%%%%%%%%%%%%%%%%%%%%%%%%%%%%%%%%%%%%%%%%
\secao{Preliminary estimates}\label{linear1}
%%%%%%%%%%%%%%%%%%%%%%%%%%%%%%%%%%%%%%%%%%%%%%%%%%%%%%%%%%%

In this section we record some linear estimates associated to the IVP (\ref{ivp1}).  These estimates are not new and can be found in the literature. For the sake of clearness we sketch the ideas involved and provide references where a detailed proof can be found.

\begin{lemma}\label{lema2.1}
If $u_0 \in L^2(\R)$, then
\begin{equation}\label{221}
\|\partial_xS(t)u_0\|_{L_x^{\infty}L_t^2} \leq C\|u_0\|_{L_x^2}.
\end{equation}
If $f \in L_x^{1} L_t^2$, then
\begin{equation}\label{eq2.2}
\Big\|\partial_x\int_0^{t}S(t-t')f(\cdot,t')dt'\Big\|_{L_t^{\infty}L_x^2} \leq C\|f\|_{L_x^{1} L_t^2},
\end{equation}
and
\begin{equation}\label{eq2.3}
\Big\|\partial_x^{2}\int_0^{t}S(t-t')f(\cdot,t')dt'\Big\|_{L_x^{\infty}L_t^2} \leq C\|f\|_{L_x^{1} L_t^2}.
\end{equation}
\end{lemma}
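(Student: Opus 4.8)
Estimates \eqref{221}--\eqref{eq2.3} are the classical local--smoothing estimates for the Airy propagator $S(t)$, originally due to Kato and, in the sharp form that is global in the spatial variable, to Kenig, Ponce and Vega; the plan is to recall the main ideas and to point to \cite{KPV3} (see also \cite{KPV4}) for complete proofs. The overall strategy is to establish the homogeneous bound \eqref{221} by Plancherel in the time variable, and then to derive the two inhomogeneous bounds \eqref{eq2.2} and \eqref{eq2.3} from it by a duality ($TT^{*}$--type) argument combined with the group laws for $S(t)$.

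First I would prove \eqref{221}. Starting from $\partial_x S(t)u_0(x) = c\int_{\R} i\xi\, e^{i(x\xi + t\xi^{3})}\hat{u_0}(\xi)\,d\xi$, fix $x$ and substitute $\eta = \xi^{3}$ on each of the half-lines $\{\xi>0\}$ and $\{\xi<0\}$. Since $d\eta = 3\xi^{2}\,d\xi$, the factor $\xi$ produced by the derivative $\partial_x$ exactly absorbs the Jacobian singularity at $\xi=0$, so that $t\mapsto\partial_x S(t)u_0(x)$ is displayed as a constant multiple of the inverse time--Fourier transform of an $L^{2}_{\eta}$ function; Plancherel in $t$ and undoing the substitution then give $\|\partial_x S(t)u_0(x)\|_{L^{2}_{t}}^{2} = C\int_{\R}|\hat{u_0}(\xi)|^{2}\,d\xi$, and this is uniform in $x$ because $|e^{ix\xi}|=1$. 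Taking the supremum over $x$ yields \eqref{221}. The same computation without the derivative diverges, which is precisely why the gain of one derivative is essential.

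Next I would pass to the inhomogeneous estimates. Dualizing \eqref{221} via $S(t)^{*}=S(-t)$ and $\partial_x^{*}=-\partial_x$ gives $\big\|\int_{\R}\partial_x S(-t')f(\cdot,t')\,dt'\big\|_{L^{2}_{x}}\le C\|f\|_{L^{1}_{x}L^{2}_{t}}$. Because $S(t)$ is unitary on $L^{2}_{x}$ and commutes with $\partial_x$, this immediately produces the full--line version of \eqref{eq2.2}; and, after writing $\partial_x^{2}\int_{\R}S(t-t')f\,dt' = \partial_x S(t)\big(\int_{\R}\partial_x S(-t')f(\cdot,t')\,dt'\big)$ and applying the dual estimate to the inner $L^{2}_{x}$ function followed once more by \eqref{221}, also the full--line version of \eqref{eq2.3}. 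It then remains only to replace $\int_{\R}$ by the Duhamel integral $\int_{0}^{t}$. For \eqref{eq2.2} this is free from the Christ--Kiselev lemma, whose hypotheses hold because the relevant time exponents are strictly ordered ($2$ on the input, $\infty$ on the output).

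The hard part will be this last replacement in \eqref{eq2.3}, where the time exponent is $2$ on both sides so that Christ--Kiselev does not apply and the retarded integral cannot be obtained for free from the full--line one. I would handle it by the device of Kenig--Ponce--Vega: using $\mathbf{1}_{[0,t]}(t') = \tfrac12\big(1+\sgn(t-t')\big)$, the retarded operator splits as the already--controlled full--line operator plus $f\mapsto\partial_x^{2}\int_{\R}\sgn(t-t')\,S(t-t')f(\cdot,t')\,dt'$, and one proves $\big\|\partial_x^{2}\int_{\R}\sgn(t-t')\,S(t-t')f(\cdot,t')\,dt'\big\|_{L^{\infty}_{x}L^{2}_{t}}\le C\|f\|_{L^{1}_{x}L^{2}_{t}}$ directly, by a stationary--phase analysis of the resulting oscillatory kernel; this is exactly the computation carried out in \cite{KPV3}. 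Apart from this point, everything reduces to routine Plancherel and duality manipulations.
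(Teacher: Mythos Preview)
Your proposal is correct and follows exactly the route the paper has in mind: the paper's own proof is just a pointer to Theorem~3.5 in \cite{KPV3} (and Section~4 of \cite{KPV5}) for \eqref{221} and \eqref{eq2.3}, together with the remark that \eqref{eq2.2} is the dual of \eqref{221}. What you wrote is a faithful and accurate expansion of precisely that argument---Plancherel plus the change of variables $\eta=\xi^{3}$ for the homogeneous smoothing, duality for \eqref{eq2.2}, and the $TT^{*}$ identity $\partial_x^{2}\int S(t-t')f\,dt' = \partial_x S(t)\big(\int \partial_x S(-t')f\,dt'\big)$ plus the $\sgn(t-t')$ decomposition for the retarded version of \eqref{eq2.3}---so there is nothing to correct.
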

\begin{proof}
 For the proof of the homogeneous smoothing effect (\ref{221}) and the double smoothing effect \eqref{eq2.3}, see Theorem 3.5 in \cite{KPV3}  (see also Section $4$ in  \cite{KPV5}). The inequality \eqref{eq2.2} is the dual version of \eqref{221}.
\end{proof}

Now we give the maximal function estimate.

\begin{lemma}\label{lema2.2}
If $u_0 \in \dot{H}^{1/4}(\R)$, then
\begin{equation}\label{231}
\|S(t)u_0\|_{L_x^4L_T^{\infty}} \leq C \|D_x^{1/4}u_0\|_{L^2(\R)}.
\end{equation}
 Also, we have
\begin{equation}\label{231.1}
\|S(t)u_0\|_{L_x^{\infty}L_T^{\infty}} \leq C \|u_0\|_{H^{\frac12+}(\R)}.
\end{equation}
%and for, $0\leq\theta\leq 1$
%\begin{equation}\label{231.2}
%\|S(t)u_0\|_{L_x^{\frac4\theta}L_T^{\infty}} \leq C \|u_0\|_{H^{\frac{2-\theta}4+}(\R)}.
%\end{equation}
\end{lemma}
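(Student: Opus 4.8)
The plan is to handle the two inequalities separately, the second being essentially a Sobolev embedding. For \eqref{231.1}, recall that $\{S(t)\}_{t\in\R}$ is a unitary group on every space $H^s(\R)$, so that $\|S(t)u_0\|_{H^{1/2+}(\R)}=\|u_0\|_{H^{1/2+}(\R)}$ for all $t$; combined with the one-dimensional Sobolev embedding $H^{1/2+}(\R)\hookrightarrow L^\infty(\R)$, this gives $\sup_x|S(t)u_0(x)|\le C\|u_0\|_{H^{1/2+}(\R)}$ with a constant independent of $t$, and taking the supremum over $t\in[0,T]$ yields \eqref{231.1}.

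For \eqref{231} I would argue by $TT^*$ duality, in the spirit of the Kenig--Ruiz estimate for the Schr\"odinger maximal function. Writing $\widetilde S(t)u_0:=S(t)D_x^{-1/4}u_0$, the inequality \eqref{231} (with $L_t^\infty$ over all of $\R$, which is stronger) amounts to the boundedness of $\widetilde S:L^2(\R)\to L_x^4L_t^\infty$, and for this it suffices to show that $\widetilde S\,\widetilde S^{\,*}:L_x^{4/3}L_t^1\to L_x^4L_t^\infty$ is bounded. This last operator is convolution in the $(x,t)$ variables against the kernel
\begin{equation*}
K(x,t)=\int_{\R}e^{i(x\xi+t\xi^3)}|\xi|^{-1/2}\,d\xi .
\end{equation*}
The decisive point is the pointwise bound $|K(x,t)|\le C|x|^{-1/2}$, uniformly in $t\in\R$. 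This is the classical Airy oscillatory-integral estimate of Kenig--Ponce--Vega (see \cite{KPV3, KPV5}): one splits the $\xi$-integral according to whether the phase $x\xi+t\xi^3$ has a critical point $\xi_c$ (solving $x+3t\xi_c^2=0$) in the region at hand, integrates by parts in the non-stationary part, and applies van der Corput near $\xi_c$, where the second derivative of the phase equals $6t\xi$ and must be played off against the singular weight $|\xi|^{-1/2}$. Granting this, one estimates
\begin{equation*}
|\widetilde S\,\widetilde S^{\,*}F(x,t)|\le C\int_{\R}\int_{\R}|x-y|^{-1/2}|F(y,s)|\,dy\,ds=C\int_{\R}|x-y|^{-1/2}\Big(\int_{\R}|F(y,s)|\,ds\Big)dy,
\end{equation*}
which is already independent of $t$, so the $L_t^\infty$ norm comes for free; and the Hardy--Littlewood--Sobolev inequality in the $x$ variable (the exponents $4/3$ and $4$ being exactly those forced by the $-1/2$ homogeneity of $|x|^{-1/2}$ in one dimension) bounds it in $L_x^4$ by $C\,\big\|\,\|F(x,\cdot)\|_{L_t^1}\big\|_{L_x^{4/3}}=C\|F\|_{L_x^{4/3}L_t^1}$. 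Unwinding the $TT^*$ reduction gives \eqref{231}, and restricting the time integral to $[0,T]$ gives the stated form.

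The main obstacle is precisely the uniform-in-$t$ kernel bound $|K(x,t)|\le C|x|^{-1/2}$: the weight $|\xi|^{-1/2}$ is singular exactly at $\xi=0$, which is where the critical point $\xi_c\sim(|x|/|t|)^{1/2}$ migrates as $|t|/|x|\to\infty$, so the stationary-phase analysis near the frequency origin needs some care. Once that is settled the rest --- the $TT^*$ duality, discarding the $|t-s|$ dependence of the kernel, and the Hardy--Littlewood--Sobolev step --- is routine. If only a non-sharp amount of regularity is required, which would already suffice for the $H^1(\R)$ theory developed below since $H^1(\R)\subset H^{1/4+}(\R)$, one can instead localize $u_0$ to dyadic frequency annuli $|\xi|\sim N$, where the weight is bounded and the corresponding kernel obeys $|K_N(x,t)|\le C(1+|x|)^{-1/2}$ with no singularity, run the same $TT^*$ and Hardy--Littlewood--Sobolev argument to obtain $\|S(t)P_Nu_0\|_{L_x^4L_t^\infty}\le CN^{1/4}\|P_Nu_0\|_{L^2}$, and then sum the dyadic pieces at the cost of an $\varepsilon$ of regularity via Cauchy--Schwarz.
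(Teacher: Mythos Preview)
Your proposal is correct and matches the paper's approach: the paper proves \eqref{231.1} by Sobolev embedding (exactly your argument, using that $S(t)$ is unitary on $H^s$), and for \eqref{231} it simply cites Theorem~3.7 in \cite{KPV3} (see also \cite{KPV6} and \cite{KR1}), whose content is precisely the $TT^*$/Kenig--Ruiz argument you sketch. You have supplied the details the paper defers to the literature, including the key uniform-in-$t$ kernel bound $|K(x,t)|\le C|x|^{-1/2}$ and the Hardy--Littlewood--Sobolev step.
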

\begin{proof}
For the proof of the estimate \eqref{231} we refer to Theorem 3.7 in \cite{KPV3} (see also \cite{KPV6} and  \cite{KR1}). The estimate \eqref{231.1} follows from Sobolev embedding. 
%The interpolation between \eqref{231} and \eqref{231.1}  yields \eqref{231.2}.
\end{proof}

In what follows, we state some more estimates that will be used in our analysis.
\begin{lemma}\label{lema2.3}
If $u_0 \in L^2(\R)$, then
\begin{equation}\label{241}
\|S(t)u_0\|_{L_x^{5}L_t^{10}} \leq C \|u_0\|_{L_x^2}.
\end{equation}
Also we have
\begin{equation}\label{242}
\| \partial_xS(t)u_0\|_{L_x^{20}L_t^{5/2}} \leq C \|D_x^{1/4}u_0\|_{L_x^2},
\end{equation}
and 
\begin{equation}\label{242-1}
\| \partial_xS(t)u_0\|_{L_x^{40/3}L_t^{20/7}} \leq C \|D_x^{3/8}u_0\|_{L_x^2}.
\end{equation} 
\end{lemma}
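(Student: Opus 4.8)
The three inequalities in Lemma~\ref{lema2.3} are classical smoothing/maximal-function estimates for the Airy propagator $S(t)$, and each is invariant under the rescaling $u_0(x)\mapsto u_0(\lambda x)$ of the $L^2$ norm: writing each one as $\|D_x^{s}S(t)u_0\|_{L_x^qL_t^p}\le C\|u_0\|_{L_x^2}$, the relation $s=\frac1q+\frac3p-\frac12$ holds, with $(s,q,p)=(0,5,10)$ for \eqref{241}, $(s,q,p)=(\tfrac34,20,\tfrac52)$ for \eqref{242} (after moving $D_x^{1/4}$ onto the datum), and $(s,q,p)=(\tfrac58,\tfrac{40}{3},\tfrac{20}{7})$ for \eqref{242-1}. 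The plan is to obtain all three simultaneously from the two estimates already recorded: the homogeneous local smoothing estimate \eqref{221} of Lemma~\ref{lema2.1}, which by the same elementary Plancherel computation also holds with $D_x$ in place of $\partial_x$, i.e.\ $\|D_xS(t)u_0\|_{L_x^{\infty}L_t^2}\le C\|u_0\|_{L_x^2}$, and the sharp maximal function estimate \eqref{231} of Lemma~\ref{lema2.2}, used in its global-in-time form $\|D_x^{-1/4}S(t)u_0\|_{L_x^{4}L_t^{\infty}}\le C\|u_0\|_{L_x^2}$.

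Interpolating, by Stein's analytic interpolation theorem in the mixed-norm Lebesgue scale $L_x^qL_t^p$ along the family $u_0\mapsto D_x^{w(z)}S(t)u_0$, between these endpoints --- derivative order $s=1$ at $(q,p)=(\infty,2)$ and $s=-\tfrac14$ at $(q,p)=(4,\infty)$ --- yields, for $0\le\theta\le1$,
\begin{equation*}
\|D_x^{s(\theta)}S(t)u_0\|_{L_x^{q(\theta)}L_t^{p(\theta)}}\le C\|u_0\|_{L_x^2},\qquad \frac1{q(\theta)}=\frac{1-\theta}{4},\quad\frac1{p(\theta)}=\frac\theta2,\quad s(\theta)=\frac{5\theta-1}{4},
\end{equation*}
and one checks that $s(\theta)=\frac1{q(\theta)}+\frac3{p(\theta)}-\frac12$ along the whole family. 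Then \eqref{241} is the case $\theta=\frac15$. For \eqref{242} take $\theta=\frac45$, which gives $\|D_x^{3/4}S(t)v\|_{L_x^{20}L_t^{5/2}}\le C\|v\|_{L_x^2}$; with $v=D_x^{1/4}u_0$ this becomes $\|D_xS(t)u_0\|_{L_x^{20}L_t^{5/2}}\le C\|D_x^{1/4}u_0\|_{L_x^2}$, and since $\partial_x=\sgn(D_x)\,D_x$ with the Hilbert transform $\sgn(D_x)$ acting only in $x$ and bounded on $L_x^{20}(\R;L_t^{5/2})$ (vector-valued Calder\'on--Zygmund, $1<20<\infty$), one may replace $D_x$ by $\partial_x$. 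Estimate \eqref{242-1} is the case $\theta=\frac7{10}$, treated identically, with the Hilbert transform now acting on $L_x^{40/3}(\R;L_t^{20/7})$.

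The main obstacle is the interpolation step. The norms carry $x$ on the outside (so they are not Strichartz-admissible in the usual $L_t^qL_x^r$ sense) and one endpoint involves an $L_t^{\infty}$; since $L^{\infty}$ is not UMD, the vector-valued multiplier theory needed to carry the imaginary part of $D_x^{w(z)}$ through the $L_x^4L_t^{\infty}$ endpoint is not directly at hand. This can be circumvented by arranging the Stein family so that the imaginary part of the power is absent on the $L_t^{\infty}$ side, or --- more robustly --- by appealing to the direct proof of the whole family in Kenig--Ponce--Vega, which derives it from van~der~Corput / oscillatory-integral bounds for the Airy kernel together with a $TT^{*}$ argument rather than from interpolating the two named endpoints. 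One must also use the maximal function estimate with $L_t^{\infty}$ over all of $\R$ (the sharp Kenig--Ponce--Vega estimate), not merely the finite-time version quoted in Lemma~\ref{lema2.2}. Detailed proofs can be found in \cite{KPV3} (see also \cite{KPV5}, \cite{KPV6}, \cite{KR1}).
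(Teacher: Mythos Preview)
Your proof is correct and follows essentially the same route as the paper: Stein analytic interpolation between the local smoothing estimate \eqref{221} and the maximal function estimate \eqref{231}, which is exactly what the paper does for \eqref{242-1} (the paper simply cites \cite{KPV3} directly for \eqref{241} and \eqref{242} rather than re-deriving them from the same family). Your discussion of the Hilbert-transform step and the $L_t^\infty$ endpoint is more careful than the paper's, but the underlying argument is the same.
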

\begin{proof} 
The proof of the estimates \eqref{241} and \eqref{242} can be found in Corollary 3.8 and Proposition 3.17 in  \cite{KPV3} respectively. To prove \eqref{242-1} we consider the analytic family of operators 
$$T_zu_0 = D_x^{-z/4}D_x^{1-z}S(t)u_0, \qquad \text{with}\;\; z\in \C,\; 0\leq \Re z\leq 1.$$ Now the estimate \eqref{242-1} follows by choosing $z=3/40$ in the Stein's theorem of analytic interpolation (see \cite{SW1}) between the smoothing estimate \eqref{221} and the maximal function estimate \eqref{231}.
\end{proof}

\begin{lemma}\label{lem-f}
Let $u_0\in L_x^2$, then for any $(\theta, \alpha)\in [0, 1]\times [0,\frac12]$, we have
\begin{equation}\label{eq-pq}
\|D_x^{\theta\alpha/2}S(t)u_0\|_{L_T^qL_x^p}\leq C\|u_0\|_{L_x^2},
\end{equation}
where $(q,p) = (\frac6{\theta(\alpha+1)}, \frac2{1-\theta})$.
\end{lemma}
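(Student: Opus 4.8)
\emph{Proof strategy.} The plan is to obtain \eqref{eq-pq} by the $TT^{*}$ method, treating it as a Strichartz-type inequality for the Airy group with a gain of $\theta\alpha/2$ derivatives. First, the case $\theta=0$ is immediate: then $(q,p)=(\infty,2)$ and $D_x^{\theta\alpha/2}$ is the identity, so \eqref{eq-pq} is just the $L_x^{2}$-unitarity of $S(t)$; so from now on assume $0<\theta\le 1$.

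\emph{Step 1 (dispersive estimates).} The basic ingredient is the fixed-time bound
\begin{equation*}
\|D_x^{a}S(\tau)f\|_{L_x^{\infty}}\le C\,|\tau|^{-(1+a)/3}\|f\|_{L_x^{1}},\qquad 0\le a\le\tfrac12,
\end{equation*}
which for $a=0$ is the classical Airy dispersive estimate and for the remaining $a$ follows from van der Corput's lemma applied to the oscillatory kernel $\int\xi^{a}e^{i(x\xi+\tau\xi^{3})}\,d\xi$ (cf.\ \cite{KPV3}), the borderline case $a=\tfrac12$ giving decay $|\tau|^{-1/2}$. Interpolating this at $a=\alpha\le\tfrac12$ (available throughout the admissible range) with the energy identity $\|S(\tau)f\|_{L_x^{2}}=\|f\|_{L_x^{2}}$, via Stein's analytic interpolation theorem (\cite{SW1}) applied to the operator family $z\mapsto D_x^{z\alpha}S(\tau)$ — admissible because $D_x^{iy}$ is an isometry on $L_x^{2}$, while $\|D_x^{(1+iy)\alpha}S(\tau)\|_{L_x^{1}\to L_x^{\infty}}$ grows only polynomially in $y$, again by van der Corput — one gets, for the given $(\theta,\alpha)$,
\begin{equation*}
\|D_x^{\theta\alpha}S(\tau)f\|_{L_x^{\,2/(1-\theta)}}\le C\,|\tau|^{-\theta(\alpha+1)/3}\|f\|_{L_x^{\,2/(1+\theta)}},
\end{equation*}
the exponents being forced by scaling.

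\emph{Step 2 ($TT^{*}$ and Hardy--Littlewood--Sobolev).} Writing $Au_0=D_x^{\theta\alpha/2}S(\cdot)u_0$, one has $(AA^{*}G)(t)=\int D_x^{\theta\alpha}S(t-s)G(s)\,ds$, so by duality \eqref{eq-pq} is equivalent to $\|AA^{*}G\|_{L_t^{q}L_x^{p}}\le C\|G\|_{L_t^{q'}L_x^{p'}}$ with $(q,p)=(6/(\theta(\alpha+1)),\,2/(1-\theta))$. One applies the dispersive estimate of Step 1 in $x$ for each fixed pair $(t,s)$, and then the one-dimensional Hardy--Littlewood--Sobolev inequality in $t$ with Riesz-potential exponent $\theta(\alpha+1)/3$, which lies in $(0,\tfrac12]\subset(0,1)$ and hence is non-degenerate; the numerology $\tfrac2q=\tfrac{\theta(\alpha+1)}3$ gives precisely $q=6/(\theta(\alpha+1))$, while $p'=2/(1+\theta)$ is the conjugate of $p=2/(1-\theta)$, and one checks along the way that $p,q\ge2$ and $1<p'\le2$ throughout. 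Since every estimate used holds with $[0,T]$ replaced by $\R$, the constant $C$ is independent of $T$, and \eqref{eq-pq} follows.

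\emph{Main obstacle.} The delicate point is the endpoint dispersive bound $\|D_x^{1/2}S(\tau)f\|_{L_x^{\infty}}\le C|\tau|^{-1/2}\|f\|_{L_x^{1}}$ and, more precisely, the polynomial-in-$y$ control of $\|D_x^{(1+iy)\alpha}S(\tau)\|_{L_x^{1}\to L_x^{\infty}}$ needed to run Stein interpolation through the $L_x^{\infty}$ endpoint: this is an oscillatory-integral estimate for a kernel with a complex, mildly singular symbol, and it is exactly where the oscillation of $S(\tau)$ must be used to compensate for the fact that imaginary powers of $D_x$ are not bounded on $L_x^{1}$ or $L_x^{\infty}$ in isolation. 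Once these fixed-time bounds are in hand from the oscillatory-integral machinery of \cite{KPV3}, the $TT^{*}$ reduction and the Hardy--Littlewood--Sobolev step are routine.
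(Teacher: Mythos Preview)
Your argument is correct. The paper does not actually prove this lemma but merely cites Lemma~2.4 of \cite{KPV0}; the $TT^{*}$ route you outline---fixed-time dispersive decay $\|D_x^{a}S(\tau)\|_{L^{1}\to L^{\infty}}\le C|\tau|^{-(1+a)/3}$ for $0\le a\le\tfrac12$, Stein interpolation with the $L^{2}$ isometry, and Hardy--Littlewood--Sobolev in time---is precisely the argument given in that reference, including the oscillatory-integral control of $D_x^{(1+iy)\alpha}S(\tau)$ at the $L^{1}\to L^{\infty}$ endpoint that you correctly flag as the main technical point.
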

\begin{proof}
See Lemma 2.4 in \cite{KPV0}.
\end{proof}

We state next the Leibniz's rule for fractional derivatives whose proof is
also given in  \cite{KPV3}, Theorem A.8.
\begin{lemma}\label{leibniz}
Let $\alpha \in (0, 1)$, $\alpha_1, \alpha_2 \in [0, \alpha]$,
 $\alpha_1+\alpha_2 = \alpha$. Let $p, p_1, p_2, q, q_1, q_2 \in (1, \infty)$
 be such that $\frac{1}{p} = \frac{1}{p_1}+\frac{1}{p_2}$,  $\frac{1}{q} =
 \frac{1}{q_1}+\frac{1}{q_2}$. Then
\begin{equation}\label{leib-1}
\|D_x^{\alpha}(fg) - fD_x^{\alpha}g-gD_x^{\alpha}f \|_{L_x^pL_T^q} \leq C
\|D_x^{\alpha_1}f\|_{L_x^{p_1}L_T^{q_1}} \|D_x^{\alpha_2}
g\|_{L_x^{p_2}L_T^{q_2}}.
\end{equation}
Moreover, for $\alpha_1 = 0 $ the value $q_1 = \infty$ is allowed.
\end{lemma}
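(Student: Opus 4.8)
The plan is to realise the left-hand side of \eqref{leib-1} as a bilinear Fourier multiplier acting in the $x$ variable, to renormalise it into a symbol of order zero, and then to estimate it by a Littlewood--Paley decomposition whose pieces are handled by the Coifman--Meyer multiplier theorem, uniformly in $t$. First I would pass to the Fourier side in $x$: for each fixed $t$, the definition of $D_x^\alpha$ and the convolution formula for $\widehat{fg}$ give
\begin{equation*}
\big(D_x^\alpha(fg)-fD_x^\alpha g-gD_x^\alpha f\big)(x)=c\int_{\R^2}e^{ix(\xi+\eta)}\,\sigma(\xi,\eta)\,\hat f(\xi)\hat g(\eta)\,d\xi\,d\eta,\qquad \sigma(\xi,\eta)=|\xi+\eta|^\alpha-|\xi|^\alpha-|\eta|^\alpha.
\end{equation*}
Writing $F=D_x^{\alpha_1}f$, $G=D_x^{\alpha_2}g$ and pulling out $|\xi|^{\alpha_1}$, $|\eta|^{\alpha_2}$ from $\hat f,\hat g$, this equals the bilinear operator $T_a(F,G)$ with symbol $a(\xi,\eta)=\sigma(\xi,\eta)/(|\xi|^{\alpha_1}|\eta|^{\alpha_2})$, which is homogeneous of degree $\alpha-\alpha_1-\alpha_2=0$. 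The claim \eqref{leib-1} thus reduces to the bilinear bound $\|T_a(F,G)\|_{L_x^pL_T^q}\le C\|F\|_{L_x^{p_1}L_T^{q_1}}\|G\|_{L_x^{p_2}L_T^{q_2}}$.

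The delicate point is that $a$ is not a globally admissible Coifman--Meyer symbol: along the three sheets $\{\xi=0\}$, $\{\eta=0\}$ and $\{\xi+\eta=0\}$ the fractional powers are only H\"older continuous, so the derivatives of $a$ are not dominated by $(|\xi|+|\eta|)^{-(a+b)}$ there. I would therefore localise in frequency, introducing a homogeneous Littlewood--Paley family $\{\Delta_j\}$ in $x$ and splitting $T_a(F,G)$ into the three standard regimes $|\xi|\ll|\eta|$, $|\eta|\ll|\xi|$ and $|\xi|\sim|\eta|$ (Bony's paraproduct). In the two paraproduct regimes the output frequency is comparable to the larger input frequency, so $|\xi+\eta|\sim|\eta|$ (resp.\ $\sim|\xi|$); restricting by $|\xi|\sim2^j$, $|\eta|\sim2^k$ with $j<k-2$ and Taylor expanding $|\xi+\eta|^\alpha-|\eta|^\alpha=\alpha\,\sgn(\eta)|\eta|^{\alpha-1}\xi+O(|\eta|^{\alpha-2}|\xi|^2)$ shows that the localised normalised symbol is smooth, of the right homogeneity, and carries the summable gain $2^{(j-k)(1-\alpha_1)}$ as $j-k\to-\infty$; the remaining piece $-|\xi|^\alpha$ is of lower order and treated likewise. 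Each localised piece is then a classical smooth bilinear multiplier supported away from the axes, so the Coifman--Meyer theorem applies with constants summable in $j-k$.

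In the resonant regime $|\xi|\sim|\eta|\sim2^j$ the three terms of $\sigma$ are each of size $2^{j\alpha}$, which I distribute as $|\xi|^{\alpha_1}|\eta|^{\alpha_2}$; although the output may be concentrated at low frequency, Bernstein's inequality together with the almost orthogonality of the surviving sum controls this contribution, and exactly the same estimate applies to $fD_x^\alpha g$ and $gD_x^\alpha f$ separately, so no cancellation is needed there. To reassemble, I would sum the dyadic pieces using the Littlewood--Paley square-function characterisation of $L_x^p$ and H\"older's inequality in $1/p=1/p_1+1/p_2$. The mixed norm is incorporated by treating all functions as taking values in $L_T^q$: since $T_a$ acts only in $x$, the scalar estimates have vector-valued extensions (the spaces $L_T^q$, $L_T^{q_1}$, $L_T^{q_2}$ are UMD for $1<q,q_1,q_2<\infty$, so both the square-function theorem and the Coifman--Meyer bound persist), and $1/q=1/q_1+1/q_2$ lets H\"older close the integration in $t$. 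The borderline case $\alpha_1=0$, $q_1=\infty$ is recovered by observing that then $F=f$ enters only through the low-frequency factor in the paraproduct, which one bounds directly in $L_x^{p_1}L_T^\infty$ rather than through the square function.

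The main obstacle is the localised symbol analysis in the paraproduct regime: it is precisely here that the non-smoothness of $|\cdot|^\alpha$ on the three singular sheets has to be neutralised by the frequency separation, and one must verify both that the dyadically restricted symbol genuinely satisfies the Coifman--Meyer derivative bounds and that the resulting constants are summable in the dyadic parameters so as to reproduce the clean product estimate \eqref{leib-1}.
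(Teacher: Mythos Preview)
The paper does not actually prove this lemma; it merely records the statement and refers the reader to Theorem~A.8 of Kenig--Ponce--Vega \cite{KPV3} for the proof. So the relevant comparison is between your outline and the original KPV argument.

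Your route---rewriting the commutator as a bilinear multiplier with homogeneous degree-zero symbol $a(\xi,\eta)=(|\xi+\eta|^\alpha-|\xi|^\alpha-|\eta|^\alpha)/|\xi|^{\alpha_1}|\eta|^{\alpha_2}$, performing a Bony paraproduct decomposition, and invoking Coifman--Meyer on the dyadically localised pieces---is a legitimate and by now standard way to obtain Kato--Ponce/Leibniz estimates. The symbol analysis you sketch (Taylor expansion in the off-diagonal regime, gain $2^{(j-k)\min(\alpha_2,\,1-\alpha_1)}$, separate treatment of the diagonal block) is correct in outline. This is genuinely different from \cite{KPV3}: there the proof proceeds by a direct pointwise/kernel representation of $D_x^\alpha$ as a singular integral and elementary size estimates, with the mixed $L_x^pL_T^q$ norms handled from the outset by Minkowski and H\"older rather than by any abstract extension principle. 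Their argument is more hands-on and avoids multiplier theory entirely; yours is more structural but imports heavier machinery.

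One point in your sketch deserves a caution. You pass to the mixed norm by declaring that $T_a$ acts only in $x$ and that UMD for $L_T^q$ transfers the scalar bound to the $L_T^q$-valued setting. For \emph{linear} Calder\'on--Zygmund operators this is exactly right, but for \emph{bilinear} multipliers the UMD property alone does not automatically deliver the extension $L_x^{p_1}(L_T^{q_1})\times L_x^{p_2}(L_T^{q_2})\to L_x^{p}(L_T^{q})$; bilinear vector-valued Coifman--Meyer theorems exist but require additional structure or a more careful argument on each paraproduct piece (typically one bounds the low-frequency factor by a maximal function and the high-frequency part by a square function, both of which do admit UMD-valued versions, and then closes with H\"older in $t$). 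This is fixable, but it is precisely where your outline is thinnest, and it is the step that the direct KPV kernel approach sidesteps entirely.
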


\begin{definition}\label{def.2}
Let $1\leq p, \;q\leq \infty$, $-\frac14\leq\alpha\leq 1$. We say that a triple
$(p, q, \alpha)$ is an admissible triple if
\begin{equation}\label{admis}
\frac1p+\frac1{2q} =\frac14 \qquad \text{and}\qquad \alpha = \frac2q-\frac1p.
\end{equation}
\end{definition}

\begin{proposition}\label{kpv-7}
For any admissible triples $(p_j, q_j, \alpha_j)$, $j =1,2$, the following estimate holds
\begin{equation}\label{eq2.6}
\Big\|D_x^{\alpha_1}\int_0^{t}S(t-t')f(\cdot,t')dt'\Big\|_{L_x^{p_1}L_t^{q_1}} \leq C\|D_x^{-\alpha_2}f\|_{ L_x^{p_2'} L_t^{q_2'}},
\end{equation}
where $p_2', q_2'$ are the conjugate exponents of $p_2, q_2$.
\end{proposition}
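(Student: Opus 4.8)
The plan is to obtain Proposition \ref{kpv-7} from the homogeneous linear estimates of Section \ref{linear1} by a $TT^{*}$ argument, together with the Christ--Kiselev lemma, the latter being what is needed to replace the full convolution $\int_{\R}$ by the retarded one $\int_{0}^{t}$. The first step is to record the homogeneous estimate for an arbitrary admissible triple: if $(p,q,\alpha)$ is admissible in the sense of Definition \ref{def.2}, then
\begin{equation*}
\|D_x^{\alpha}S(t)u_0\|_{L_x^{p}L_t^{q}}\leq C\|u_0\|_{L_x^{2}}.
\end{equation*}
The two endpoints of this family are the sharp smoothing effect \eqref{221}, corresponding to $(\infty,2,1)$, and the maximal function estimate \eqref{231}, corresponding to $(4,\infty,-1/4)$; interpolating between them (as in \cite{KPV3}) yields the whole admissible range, and in particular recovers \eqref{241} and \eqref{242}. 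Dualizing in the $x$ and $t$ variables and using $S(t)^{*}=S(-t)$ together with the self-adjointness of $D_x^{\alpha}$, this estimate is equivalent to
\begin{equation*}
\Big\|\int_{\R}S(-t')D_x^{\alpha}h(\cdot,t')\,dt'\Big\|_{L_x^{2}}\leq C\|h\|_{L_x^{p'}L_t^{q'}}.
\end{equation*}

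Next I would prove the proposition with $\int_{0}^{t}$ replaced by $\int_{\R}$. After the substitution $g=D_x^{-\alpha_2}f$ the desired inequality reads
\begin{equation*}
\Big\|D_x^{\alpha_1+\alpha_2}\int_{\R}S(t-t')g(\cdot,t')\,dt'\Big\|_{L_x^{p_1}L_t^{q_1}}\leq C\|g\|_{L_x^{p_2'}L_t^{q_2'}}.
\end{equation*}
Since $D_x^{s}$ commutes with $S(t)$ and $S(t-t')=S(t)S(-t')$, the left-hand integrand can be written as $D_x^{\alpha_1}S(t)\big[D_x^{\alpha_2}S(-t')g(t')\big]$, so the operator on the left is the composition $T_1T_2^{*}$, where $T_j u_0=D_x^{\alpha_j}S(t)u_0$ is the linear map of the first display for the triple $(p_j,q_j,\alpha_j)$. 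Applying the homogeneous estimate for $(p_1,q_1,\alpha_1)$ to $T_1$ bounds the left-hand side by $C\big\|\int_{\R}S(-t')D_x^{\alpha_2}g(t')\,dt'\big\|_{L_x^{2}}$, and the dual estimate above for $(p_2,q_2,\alpha_2)$ bounds this last quantity by $C\|g\|_{L_x^{p_2'}L_t^{q_2'}}$.

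It remains to truncate the time integral at $t$, and this is where the only real subtlety lies. I would view $g\mapsto D_x^{\alpha_1+\alpha_2}\int_{\R}S(t-t')g(t')\,dt'$ as an integral operator in the time variable whose kernel takes values in the bounded operators from $L_x^{p_2'}$ to $L_x^{p_1}$, so that the estimate just proved says it is bounded from $L_t^{q_2'}$ to $L_t^{q_1}$; by the Christ--Kiselev lemma the retarded operator $\int_{0}^{t}$ (applied to $g$ supported in $t\geq 0$) obeys the same bound, provided $q_2'<q_1$. Admissibility forces $q_j\in[2,\infty]$, hence $q_2'\leq 2\leq q_1$, and the inequality is strict unless $q_1=q_2'=2$, i.e. unless both triples equal $(\infty,2,1)$; in that single borderline case the asserted estimate is exactly \eqref{eq2.3} of Lemma \ref{lema2.1}. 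Undoing the substitution $f=D_x^{\alpha_2}g$ then yields Proposition \ref{kpv-7}. The hard part, as indicated, is the passage from the untruncated to the retarded integral; everything else is bookkeeping with Fourier multipliers and duality.
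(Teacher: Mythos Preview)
The paper does not prove Proposition~\ref{kpv-7}; it simply refers the reader to Proposition~2.3 in \cite{KPV4}. Your $TT^{*}$ derivation of the \emph{untruncated} estimate from the homogeneous bound and its dual is correct and standard.

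The gap is in the truncation step. The Christ--Kiselev lemma you invoke applies to operators bounded from $L_t^{p}(\R;X)$ to $L_t^{q}(\R;Y)$ with $p<q$, i.e.\ with the time integration on the \emph{outside}. The estimate you have in hand, however, lives in $L_x^{p_2'}L_t^{q_2'}\to L_x^{p_1}L_t^{q_1}$, with the time integration on the inside. Your sentence ``the estimate just proved says it is bounded from $L_t^{q_2'}$ to $L_t^{q_1}$'' tacitly identifies $L_x^{p}L_t^{q}$ with $L_t^{q}(L_x^{p})$, and these mixed-norm spaces are genuinely different; Minkowski's inequality goes the wrong way to repair this. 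So the hypothesis of Christ--Kiselev is not available, and this is precisely where the argument, as written, breaks down---contrary to your closing remark, it is not bookkeeping.

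The proof in \cite{KPV4} (and \cite{KPV3}) sidesteps this obstacle altogether: the retarded endpoint \eqref{eq2.3} is obtained by a direct oscillatory-integral/fractional-integration computation on the Airy kernel, with no truncation lemma required, and the general admissible case is then reached by Stein analytic interpolation among retarded estimates of this type together with their duals. If you wish to keep the $TT^{*}$ route, you must supply a Christ--Kiselev-type statement adapted to the reversed norm ordering $L_x^{p}L_t^{q}$, which is a separate (and nontrivial) argument.
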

\begin{proof}
For the proof we refer to Proposition 2.3 in \cite{KPV4}.
\end{proof}
The following results will be used to complete the contraction mapping argument.
\begin{lemma}\label{lema2.4}
Let $X_T$ and $Y_T$ be the spaces defined earlier and $S$ be the unitary group associated to the operator $\partial_x^3$, then we have

\begin{equation}\label{eq2.5.1}
\|S(t)u_0\|_{X_T}\leq C_0\|u_0\|_{H^1(\R)},
\end{equation}
\begin{equation}\label{eq2.5}
\Big\|\int_0^tS(t-t')f(t')dt'\Big\|_{X_T}\leq CT^{1/2}\|f\|_{Y_T}.
\end{equation}
\end{lemma}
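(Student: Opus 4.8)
The plan is to verify the two estimates \eqref{eq2.5.1} and \eqref{eq2.5} term by term against the definition \eqref{n.11} of the $X_T$-norm, using the linear estimates collected in Lemmas \ref{lema2.1}--\ref{lem-f} and Proposition \ref{kpv-7}. For \eqref{eq2.5.1} I would run through the seven pieces of $\|S(t)u_0\|_{X_T}$ in order. The term $\|S(t)u_0\|_{L_T^\infty H^1}$ is immediate since $S(t)$ is unitary on $H^s$ for every $s$. The smoothing terms $\|\partial_x S(t)u_0\|_{L_x^\infty L_T^2}$ and $\|\partial_x^2 S(t)u_0\|_{L_x^\infty L_T^2}$ follow from \eqref{221} applied to $u_0$ and to $\partial_x u_0$, bounded by $C\|u_0\|_{L_x^2}+C\|\partial_x u_0\|_{L_x^2}\le C\|u_0\|_{H^1}$. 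The terms $\|S(t)u_0\|_{L_x^5 L_T^{10}}$ and $\|\partial_x S(t)u_0\|_{L_x^5 L_T^{10}}$ come from \eqref{241} applied to $u_0$ and $\partial_x u_0$. The term $\|\partial_x S(t)u_0\|_{L_x^{20}L_T^{5/2}}$ is exactly \eqref{242}, bounded by $C\|D_x^{1/4}u_0\|_{L_x^2}\le C\|u_0\|_{H^1}$. Finally $\|S(t)u_0\|_{L_x^4 L_T^\infty}$ is the maximal function estimate \eqref{231}, again controlled by $C\|D_x^{1/4}u_0\|_{L_x^2}\le C\|u_0\|_{H^1}$. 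Throughout, local-in-time norms on $[0,T]$ are dominated by the global ones, so the estimates stated for $L_t^q$ give the $L_T^q$ versions with the same constant. Collecting these gives \eqref{eq2.5.1} with an absolute constant $C_0$ (independent of $T$).

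For the inhomogeneous estimate \eqref{eq2.5}, I would again treat each $X_T$-component of $\Phi(t):=\int_0^t S(t-t')f(t')\,dt'$ separately, pairing it with the appropriate dual linear estimate and inserting a power of $T$ via Hölder in the time variable only where needed. The $L_T^\infty H^1$ part splits as $\|\Phi\|_{L_T^\infty L^2}+\|\partial_x\Phi\|_{L_T^\infty L^2}$; using \eqref{eq2.2} (the dual of \eqref{221}, applied to $f$ and to $\partial_x f$) one gets a bound by $C\|f\|_{L_x^1 L_T^2}+C\|\partial_x f\|_{L_x^1 L_T^2}$, and then one converts the spatial $L_x^1$ norm on a fixed interval — no, more carefully: the natural route here is to instead use the $L_x^\infty L_T^2$-dual pairing or Proposition \ref{kpv-7} with admissible triples, producing a right-hand side of the form $\|D_x^{-\alpha_2}(\partial_x f)\|_{L_x^{p_2'}L_T^{q_2'}}$, and then use Hölder in $t\in[0,T]$ to replace the time-Lebesgue index by $L_T^2$ at the cost of $T^{1/2}$ (this is where the $T^{1/2}$ is generated). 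The double-smoothing term $\|\partial_x^2\Phi\|_{L_x^\infty L_T^2}$ is handled directly by \eqref{eq2.3} applied to $\partial_x f$, giving $C\|\partial_x f\|_{L_x^1 L_T^2}\le CT^{1/2}\|\partial_x f\|_{L_x^2 L_T^2}$ after Hölder in the $x$-variable over... no: here one Hölders in time, $\|\partial_x f\|_{L_x^1 L_T^2}$ is not directly $T^{1/2}\|\partial_x f\|_{Y_T}$, so one should instead pass through Proposition \ref{kpv-7}. The cleanest uniform treatment is: for each target norm pick a pair of admissible triples so that \eqref{eq2.6} gives the bound by $\|D_x^{-\alpha_2}g\|_{L_x^{p_2'}L_T^{q_2'}}$ with $g\in\{f,\partial_x f,\partial_x^2 f\}$, choose the triple with $\alpha_2$ arranged so that $D_x^{-\alpha_2}$ lands on at most one derivative (i.e. reduces $\partial_x^2 f$ or $\partial_x f$ down to $\partial_x f$ or $f$), and then use Hölder in $t$ on $[0,T]$ to reach $L_T^2$, absorbing the length of the interval into a factor $T^{1/2}$. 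The $L_x^5L_T^{10}$, $L_x^{20}L_T^{5/2}$, and $L_x^4L_T^\infty$ components of $\Phi$ are obtained the same way from \eqref{eq2.6} (for the first two, with suitable admissible triples $(5,10,\cdot)$ and $(20,5/2,1/4)$ on the left, paired with a dual admissible triple on the right) and, for $\|\Phi\|_{L_x^4L_T^\infty}$, by first writing $\Phi(t)=S(t)\int_0^t S(-t')f(t')\,dt' \cdot$ type argument or more simply by the maximal-function-based estimate combined with \eqref{231}; in each case Hölder in time supplies the $T^{1/2}$. Summing over all components yields \eqref{eq2.5}.

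The main obstacle, and the step deserving the most care, is the bookkeeping in \eqref{eq2.5}: one must choose, for each of the seven norms defining $X_T$, a compatible pair of admissible triples in Proposition \ref{kpv-7} (or the right instance of Lemma \ref{lema2.1}) so that the resulting right-hand side involves \emph{at most one} spatial derivative of $f$ and a time-Lebesgue exponent that can be pushed down to $2$ by a clean Hölder inequality on $[0,T]$, thereby producing exactly the factor $T^{1/2}$ and the norm $\|f\|_{Y_T}=\|\partial_x f\|_{L_x^2L_T^2}+\|f\|_{L_x^2L_T^2}$ and nothing stronger. The delicate points are (i) that the $T^{1/2}$ must come out with the \emph{same} power for every term (so that no term forces a worse power of $T$), and (ii) that the $L_x^2$ spatial integrability on the right is consistent with the dual admissible exponent $p_2'$ — one may need to interpolate or to accept $p_2'=2$, which corresponds to the endpoint admissible triple $(2,\infty,\cdot)$ with care near the endpoint; if an exact endpoint triple is not admissible one can instead sacrifice an $\epsilon$ of derivative, which is harmless since only one derivative is available in $Y_T$ and one can trade regularity in $x$ using the fact that $f=u^{k+1}$ in the application has plenty of smoothness. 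Apart from this index-matching, every individual inequality is a direct citation of a result already recorded above, so no genuinely new estimate is needed.
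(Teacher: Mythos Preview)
Your treatment of \eqref{eq2.5.1} is correct and is exactly what the paper does: each of the seven pieces of $\|S(t)u_0\|_{X_T}$ is matched with one of the linear estimates in Lemmas~\ref{lema2.1}--\ref{lema2.3}, and the paper simply says so without writing out the list.

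Your treatment of \eqref{eq2.5}, however, has a real gap. You repeatedly try to route the inhomogeneous estimate through Proposition~\ref{kpv-7} and hope to land on a right-hand side with spatial exponent $p_2'=2$ so that the $Y_T$-norm appears. But the admissibility condition $\tfrac1{p_2}+\tfrac1{2q_2}=\tfrac14$ forces $p_2\ge 4$, hence $p_2'\le 4/3$; the pair $(p_2,q_2)=(2,\infty)$ you mention is \emph{not} admissible. So no choice of admissible triples in \eqref{eq2.6} can produce $\|f\|_{L_x^2L_T^{q}}$ on the right. Your fallback of ``sacrificing an $\epsilon$ of derivative'' or ``using that $f=u^{k+1}$'' is not permissible either, since the lemma is stated for arbitrary $f\in Y_T$.

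The argument the paper has in mind (it cites \cite{CPS1}) is much more elementary and avoids Proposition~\ref{kpv-7} entirely. By Minkowski's integral inequality in $t'$ applied to each mixed norm in \eqref{n.11},
\[
\Big\|\int_0^t S(t-t')f(t')\,dt'\Big\|_{X_T}
\le \int_0^T \big\|S(\cdot-t')f(t')\big\|_{X_T}\,dt'
\le C_0\int_0^T \|f(t')\|_{H^1}\,dt',
\]
where the second inequality is just \eqref{eq2.5.1} with $u_0=f(t')$ (the constants in Lemmas~\ref{lema2.1}--\ref{lema2.3} are translation-invariant in $t$ and independent of $T$). Then Cauchy--Schwarz in $t'$ over $[0,T]$ gives the factor $T^{1/2}$ and leaves $\|f\|_{L_T^2H^1}$, which by Fubini equals $(\|f\|_{L_x^2L_T^2}^2+\|\partial_xf\|_{L_x^2L_T^2}^2)^{1/2}\le \|f\|_{Y_T}$. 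You briefly gestured at this idea when you wrote ``$\Phi(t)=S(t)\int_0^t S(-t')f(t')\,dt'$ type argument'' for the $L_x^4L_T^\infty$ term, but did not recognize that the same mechanism handles all seven norms at once and makes the bookkeeping you describe as ``the main obstacle'' unnecessary.
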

\begin{proof}
The estimate \eqref{eq2.5.1} follows from the linear estimates in Lemmas \ref{lema2.1}, \ref{lema2.2} and \ref{lema2.3}. For the proof of the estimate \eqref{eq2.5}, we refer to our earlier work in \cite{CPS1}.
\end{proof}

\begin{lemma}\label{lema2.5}
The following estimate holds,
\begin{equation}\label{eq2.12}
\|\partial_x(u^{k+1})\|_{Y_T} \leq C\|u\|_{X_T}^{k+1}.
\end{equation}
\end{lemma}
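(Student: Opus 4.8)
The plan is to reduce everything to three model terms and then control each by a single application of Hölder's inequality in the mixed spacetime norm, combined with an elementary interpolation between the $L_x^4L_T^\infty$ and $L_x^\infty L_T^\infty$ pieces of the $X_T$-norm. Since $\|f\|_{Y_T}=\|\partial_xf\|_{L_x^2L_T^2}+\|f\|_{L_x^2L_T^2}$, and writing $\partial_x(u^{k+1})=(k+1)u^k\partial_xu$ and $\partial_x^2(u^{k+1})=(k+1)\big(k\,u^{k-1}(\partial_xu)^2+u^k\partial_x^2u\big)$, it suffices to estimate the three quantities $\|u^k\partial_xu\|_{L_x^2L_T^2}$, $\|u^k\partial_x^2u\|_{L_x^2L_T^2}$, and $\|u^{k-1}(\partial_xu)^2\|_{L_x^2L_T^2}$ by $C\|u\|_{X_T}^{k+1}$. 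The one auxiliary fact I will use repeatedly is that, by Sobolev embedding $H^1(\R)\hookrightarrow L^\infty(\R)$ and $L_x^\infty L_T^\infty=L_T^\infty L_x^\infty$, one has $\|u\|_{L_x^\infty L_T^\infty}\le C\|u\|_{L_T^\infty H^1}\le C\|u\|_{X_T}$, together with the interpolation inequality $\|u\|_{L_x^pL_T^\infty}\le \|u\|_{L_x^4L_T^\infty}^{4/p}\|u\|_{L_x^\infty L_T^\infty}^{1-4/p}$ valid for $4\le p\le\infty$ (just Hölder in $x$ applied to $x\mapsto\|u(x,\cdot)\|_{L_T^\infty}$).

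For the two terms involving $u^k$, I would write, by Hölder in $x$ (exponents $\tfrac12+0$) and in $t$ (exponents $0+\tfrac12$),
\[
\|u^k\partial_x^j u\|_{L_x^2L_T^2}\le \|u^k\|_{L_x^2L_T^\infty}\,\|\partial_x^j u\|_{L_x^\infty L_T^2}=\|u\|_{L_x^{2k}L_T^\infty}^k\,\|\partial_x^j u\|_{L_x^\infty L_T^2},\qquad j=1,2,
\]
and since $2k\ge10$, the interpolation above with $p=2k$ gives $\|u\|_{L_x^{2k}L_T^\infty}^k\le \|u\|_{L_x^4L_T^\infty}^2\|u\|_{L_x^\infty L_T^\infty}^{k-2}$. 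Because $\|\partial_x u\|_{L_x^\infty L_T^2}$ and $\|\partial_x^2 u\|_{L_x^\infty L_T^2}$ both appear in $X_T$, this bounds each of these two terms by $\|u\|_{L_x^4L_T^\infty}^2\|u\|_{L_T^\infty H^1}^{k-2}\|\partial_x^ju\|_{L_x^\infty L_T^2}\le C\|u\|_{X_T}^{k+1}$, the exponents adding up to $2+(k-2)+1=k+1$.

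For the remaining term $u^{k-1}(\partial_xu)^2$ I would first peel off the square of the derivative using the two distinct derivative norms of $X_T$: by Hölder, $\|(\partial_xu)^2\|_{L_x^4L_T^2}\le\|\partial_xu\|_{L_x^{20}L_T^{5/2}}\|\partial_xu\|_{L_x^5L_T^{10}}$ (in $x$: $\tfrac1{20}+\tfrac15=\tfrac14$; in $t$: $\tfrac25+\tfrac1{10}=\tfrac12$). Then Hölder again gives $\|u^{k-1}(\partial_xu)^2\|_{L_x^2L_T^2}\le\|u^{k-1}\|_{L_x^4L_T^\infty}\|(\partial_xu)^2\|_{L_x^4L_T^2}$, and since $\|u^{k-1}\|_{L_x^4L_T^\infty}=\|u\|_{L_x^{4(k-1)}L_T^\infty}^{k-1}$ with $4(k-1)\ge16$, the interpolation inequality with $p=4(k-1)$ yields $\|u^{k-1}\|_{L_x^4L_T^\infty}\le\|u\|_{L_x^4L_T^\infty}\|u\|_{L_x^\infty L_T^\infty}^{k-2}$. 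Altogether this term is bounded by $\|u\|_{L_x^4L_T^\infty}\|u\|_{L_T^\infty H^1}^{k-2}\|\partial_xu\|_{L_x^{20}L_T^{5/2}}\|\partial_xu\|_{L_x^5L_T^{10}}\le C\|u\|_{X_T}^{k+1}$, again with $1+(k-2)+1+1=k+1$ factors. Summing the three contributions proves the lemma.

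The only real bookkeeping to be careful about—and the place where one could slip—is the exponent arithmetic: one must check at each step that the Hölder exponents are admissible (all in $[1,\infty]$) and that, after the interpolation between $L_x^4L_T^\infty$ and $L_x^\infty L_T^\infty$, the interpolation parameters $2/k$ and $1/(k-1)$ lie in $(0,1)$, which holds since $k\ge5$; everything else is routine. Note also that no power of $T$ appears, so the estimate is uniform in $T$ (including $T=\infty$), which is exactly what is needed both for the contraction argument via Lemma \ref{lema2.4} and for the global result.
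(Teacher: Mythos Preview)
Your proof is correct and follows essentially the same route as the paper's: both reduce to the three model terms $u^k\partial_xu$, $u^k\partial_x^2u$, and $u^{k-1}(\partial_xu)^2$, place $k-2$ copies of $u$ in $L_x^\infty L_T^\infty$ via the Sobolev embedding $H^1\hookrightarrow L^\infty$, and then apply H\"older with the identical exponent splitting $L_x^4L_T^\infty$, $L_x^\infty L_T^2$, and $L_x^5L_T^{10}\cdot L_x^{20}L_T^{5/2}$ on the remaining factors. Your phrasing via the interpolation $\|u\|_{L_x^pL_T^\infty}\le\|u\|_{L_x^4L_T^\infty}^{4/p}\|u\|_{L_x^\infty L_T^\infty}^{1-4/p}$ is just a tidy repackaging of the paper's direct factoring $u^k=u^{k-2}\cdot u^2$.
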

\begin{proof}
The idea of the proof is similar to the one we used in \cite{CPS1} for the critical KdV equation. Using H\"older's inequality and the fact that  $H^1(\R)\hookrightarrow L^{\infty}(\R)$, we get
\begin{equation}\label{eq2.13}
\|\partial_x(u^{k+1})\|_{L_x^2L_T^2}\leq C\|u^{k-2}\|_{L_x^{\infty}L_T^{\infty}}\|u^2\partial_xu\|_{L_x^2L_T^2}\leq C\|u\|_{L_T^{\infty}H^1(\R)}^{k-2}\|u\|_{L_x^4L_T^{\infty}}^2\|\partial_xu\|_{L_x^{\infty}L_T^2}.
\end{equation}

Similarly
\begin{equation}\label{eq2.14}
\begin{split}
\|\partial_x^2(u^{k+1})\|_{L_x^2L_T^2}\!&\leq \!C\big[\|u^{k-1}(\partial_xu)^2\|_{L_x^2L_T^2} + 
\|u^k\partial_x^2u\|_{L_x^2L_T^2}\big]\\
&\leq \!C\big[\|u^{k-2}\|_{L_x^{\infty}L_T^{\infty}}\|u(\partial_xu)^2\|_{L_x^2L_T^2}+
\|u^{k-2}\|_{L_x^{\infty}L_T^{\infty}}\|u^2\partial_x^2u\|_{L_x^2L_T^2}\big]\\
&\leq \! C\|u\|_{L_T^{\infty}H^1(\R)}^{k-2}\big[\|u\|_{L_x^4L_T^{\infty}}\|\partial_xu\|_{L_x^{5}L_T^{10}}\|\partial_xu\|_{L_x^{20}L_T^{5/2}}\!+ \!\|u\|_{L_x^4L_T^{\infty}}^{k-2}\|\partial_x^2u\|_{L_x^{\infty}L_T^2}  \big].
\end{split}
\end{equation}

In view of definitions of $X_T$-norm and $Y_T$-norm, the estimates \eqref{eq2.13} and \eqref{eq2.14} yield the required result \eqref{eq2.12}.
\end{proof}

The following result from \cite{CS} will also be useful in our analysis.
\begin{lemma}\label{apn.1}
Let $T>0$, $1\leq p<q\leq \infty$ and $A, B \geq 0$. If $f\in L^q(0,T)$ satisfies
\begin{equation}\label{apn.2}
\|f\|_{L^q_{(0, t)}}\leq A +B\|f\|_{L^p_{(0, t)}},
\end{equation}
for all $t\in (0, T)$, then there exists a constant $K = K(B, p, q, T)$ such that
\begin{equation}\label{apn.3}
\|f\|_{L^q_{(0, T)}}\leq K A.
\end{equation}
\end{lemma}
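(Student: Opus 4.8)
The plan is to prove this Gronwall-type inequality by a localization-and-iteration argument. Set $\eta := \frac1p-\frac1q>0$ (with the convention $\eta=\frac1p$ when $q=\infty$). The key elementary fact is that, by H\"older's inequality, for any subinterval $(a,b)\subset(0,T)$ one has
\[
\|f\|_{L^p_{(a,b)}}\le (b-a)^{\eta}\,\|f\|_{L^q_{(a,b)}}.
\]
Thus on a short interval the $L^p$-norm is dominated by a small multiple of the $L^q$-norm, and this is what will let us absorb the term $B\|f\|_{L^p}$ into the left-hand side of \eqref{apn.2}.

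First I would dispose of the trivial case $B=0$, in which \eqref{apn.2} gives $\|f\|_{L^q_{(0,T)}}\le A$ outright, and henceforth assume $B>0$. Choose $\delta:=\min\{T,(2B)^{-1/\eta}\}$, so that $B\delta^{\eta}\le\frac12$. Fix $0\le s\le t\le T$ with $t-s\le\delta$. Splitting the $L^q$- and $L^p$-norms over $(0,s)$ and $(s,t)$, using subadditivity of the $L^p$-norm together with the H\"older bound on $(s,t)$, and using $\|f\|_{L^p_{(0,s)}}\le s^{\eta}\|f\|_{L^q_{(0,s)}}\le T^{\eta}\|f\|_{L^q_{(0,s)}}$, the hypothesis \eqref{apn.2} at time $t$ yields
\[
\|f\|_{L^q_{(0,t)}}\le A + B T^{\eta}\|f\|_{L^q_{(0,s)}} + B(t-s)^{\eta}\|f\|_{L^q_{(0,t)}}\le A + B T^{\eta}\|f\|_{L^q_{(0,s)}} + \tfrac12\|f\|_{L^q_{(0,t)}}.
\]
Since $f\in L^q(0,T)$, the quantity $\|f\|_{L^q_{(0,t)}}$ is finite, so the last term can be absorbed, giving
\[
\|f\|_{L^q_{(0,t)}}\le 2A + M\|f\|_{L^q_{(0,s)}},\qquad M:=2BT^{\eta},
\]
valid for all $0\le t-s\le\delta$. (When $q=\infty$ the same computation works with essential suprema in place of the $L^q$-norms.)

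Finally I would iterate. Let $N:=\lceil T/\delta\rceil$ and $t_j:=\min\{j\delta,T\}$ for $j=0,\dots,N$, so that $t_0=0$, $t_N=T$, and $t_j-t_{j-1}\le\delta$. Applying the previous estimate with $(s,t)=(t_{j-1},t_j)$ and using $\|f\|_{L^q_{(0,t_0)}}=0$, an induction on $j$ gives $\|f\|_{L^q_{(0,t_j)}}\le 2A\sum_{i=0}^{j-1}M^i$. Taking $j=N$ yields $\|f\|_{L^q_{(0,T)}}\le KA$ with $K:=2N$ if $M\le1$ and $K:=2\,\frac{M^N-1}{M-1}$ if $M>1$; in either case $K$ depends only on $B,p,q,T$, as required.

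There is no real obstacle here — this is a routine bootstrap — and the only points needing care are the choice of $\delta$ so that the absorption constant $\tfrac12$ is uniform in the base point $s$, and the remark that the hypothesis $f\in L^q(0,T)$ guarantees $\|f\|_{L^q_{(0,t)}}<\infty$ for every $t$, which is what legitimizes the absorption step.
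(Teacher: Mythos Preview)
Your argument is correct. The paper does not actually supply a proof of this lemma; it merely quotes the statement from \cite{CS} and moves on. What you have written is the standard bootstrap proof that one finds (in slightly different notation) in \cite{CS}: use H\"older on a short subinterval to make the $L^p$-term absorbable into the $L^q$-norm, then iterate over a partition of $(0,T)$ into $O(T/\delta)$ pieces. The only minor remark is that your ``subadditivity of the $L^p$-norm'' is really the elementary inequality $(a^p+b^p)^{1/p}\le a+b$ applied to the decomposition $\|f\|_{L^p_{(0,t)}}^p=\|f\|_{L^p_{(0,s)}}^p+\|f\|_{L^p_{(s,t)}}^p$; this is fine, but it might be worth saying explicitly since ``subadditivity'' usually refers to the triangle inequality in the function argument rather than in the domain.
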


%%%%%%%%%%%%%%%%%%%%%%%%%%%%%%%%%%%%%%%%%%%%%%%%%%%%%%%%%%%%%%%%%%%%%%%%%%%%%%%
\secao{Proof of the well-posedness results}\label{prmt}
%%%%%%%%%%%%%%%%%%%%%%%%%%%%%%%%%%%%%%%%%%%%%%%%%%%%%%%%%%%%%%%%%%%%%%%%%%%%%%%

We start this section by proving the well-posedness results for the IVP \eqref{ivp2} announced in Theorem \ref{th.1}. 

\begin{proof}[Proof of Theorem \ref{th.1}] 
For $a>0$, consider a ball in $X_T$ defined by
\begin{equation*}
\mathcal{B}_{a}^T = \{ u \in C([0, T] : X_T(\R)): \|u\|_{X_T} < a\}.
\end{equation*}

Our aim is to  show that, there exist $a>0$ and $T>0$,  such that the application $\Phi$
defined by 
\begin{equation}\label{in.eq-1}
\Phi(u):= S(t)\phi -\int_0^tS(t-t')\partial_x(u^{k+1})(t')dt',
\end{equation}
 maps $\mathcal{B}_{a}^T$ into $\mathcal{B}_{a}^T$ and is a contraction.

Using the estimates \eqref{eq2.5} and \eqref{eq2.12}, we obtain
\begin{equation}
\begin{split}
\|\Phi\|_{X_T}  &\leq C_0\|\phi\|_{H^1} + CT^{1/2}\|\partial_x(u^{k+1})\|_{Y_T}\\
&\leq C_0\|\phi\|_{H^1} + CT^{1/2}\|u\|_{X_T}^{k+1}.
\end{split}
\end{equation}

Hence, for $u\in\mathcal{B}_{a}^T$,
\begin{equation}\label{mx.8}
\|\Phi\|_{X_T}  \leq  C_0\|\phi\|_{H^1} + C T^{1/2} a^{k+1}.
\end{equation}

Now, choose  $a = 2C_0\|\phi\|_{H^1}$ and $T$ such that $ CT^{1/2}a^k<1/2$. With these choices
we get, from \eqref{mx.8},
\begin{equation*}
\|\Phi\|_{X_T} \leq \frac{a}{2} + \frac a2.
\end{equation*}

Therefore, $\Phi $ maps $\mathcal{B}_{a}^T$ into $\mathcal{B}_{a}^T$.

With the similar argument, one can prove that $\Phi $ is a contraction. The rest of the proof follows standard argument.
\end{proof}

\begin{remark}
From the choice of $a$ and $T$ in the proof of Theorem \ref{th.1} it is clear that the local existence time
is given by
\begin{equation}\label{eq.b1}
T \leq C\|\phi\|_{H^1(\R)}^{-2k}.
\end{equation}
Moreover, we have the following bound,
\begin{equation}\label{eq2.b2}
\|u\|_{X_T} \leq C\|\phi\|_{H^1(\R)}.
\end{equation}
\end{remark}

In what follows, we sketch a proof for the local well-posedness result for the IVP \eqref{ivp1.1}.
\begin{proof}[Proof of Theorem \ref{th.2}]
As in the proof of Theorem \ref{th.1}, this theorem will also be proved  by considering  the
integral equation associated to the IVP \eqref{ivp1.1}, 
\begin{equation}\label{os-1}
u(t)= S(t)\phi - \int_{0}^{t}S(t-t')g(\omega(t'+t_0))\partial_x(u^{k+1})(t')\,dt',
\end{equation}
and using the contraction mapping principle.

First of all, notice that the periodic function $g$ is bounded,  say $\|g\|_{L_t^{\infty}} \leq A$, for some positive constant $A$. Since the norms involved in the space $Y$ permit us to take out $\|g\|_{L_t^{\infty}}$-norm as a coefficient, the proof of this theorem follows exactly  the same argument as in the proof of Theorem \ref{th.1}. Moreover, as the initial data $\phi$ is the same, the choice of the radius $a$ of the ball is exactly the same. However, to complete the contraction mapping argument, we must select $T>0$ such that 
$
C\|g\|_{L_t^{\infty}}T^{1/2}a^4<\frac12,
$
which implies that the existence  $T$ is given by
\begin{equation}\label{tempex}
T=T(\|g\|_{L_t^{\infty}},\|\phi\|_{H^1(\R)})=\frac{C}{\|g\|_{L_t^{\infty}}^2 \|\phi\|_{H^1(\R)}^{2k}}.
\end{equation} Furthermore, in this case too, from the proof, one can get
\begin{equation}\label{pr.1}
\|u\|_{X_T}\leq C  \|\phi\|_{H^1(\R)}.
\end{equation}
\end{proof}

In sequel, we present some results that play a central role in the proof of the main theorems of this work. We begin with the following lemma whose proof can be found in \cite{CPS1}.
\begin{lemma}\label{lem.1}
Let $X_T$ and $Y_T$ be spaces as defined in \eqref{n.11} and \eqref{n.21}. Let $f\in Y_T$, then we have the following convergence
\begin{equation}\label{cv.1}
\int_0^tg(\omega (t'+t_0))S(t-t')f(t')dt' \to m(g) \int_0^tS(t-t')f(t')dt',
\end{equation}
whenever $|\omega|\to \infty$, in the $X_T$-norm.
\end{lemma}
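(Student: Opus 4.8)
The plan is to prove Lemma \ref{lem.1} by first reducing it to a density/approximation argument and then handling the oscillatory integral via an integration-by-parts (summation-by-parts) estimate on the periods of $g$. Write $g = m(g) + h$ where $h$ is periodic with mean zero; then the claim is equivalent to
\[
\Big\| \int_0^t h(\omega(t'+t_0)) S(t-t') f(t')\,dt' \Big\|_{X_T} \longrightarrow 0 \quad \text{as } |\omega| \to \infty,
\]
uniformly in $t_0$. Since $h$ has mean zero, its antiderivative $H(s) := \int_0^s h(\sigma)\,d\sigma$ is bounded and $L$-periodic. First I would treat the case when $f$ is smooth and compactly supported in $t$ (say $f \in C_c^\infty$ valued in Schwartz functions), so that $t' \mapsto S(t-t') f(t')$ is a $C^1$ curve (in $t'$) into a space on which all the norms appearing in $X_T$ are controlled; for such $f$ one integrates by parts in $t'$, picking up a factor $H(\omega(t'+t_0))/\omega$ from the antiderivative, and the boundary terms and the derivative term are all $O(1/|\omega|)$ because $\|H\|_{L^\infty} \le C$ is independent of $t_0$ and $\omega$. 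This gives the convergence for the dense class.

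The second step is the density/approximation argument. By Lemma \ref{lema2.4}, the linear operator $f \mapsto \int_0^t S(t-t') f(t')\,dt'$ is bounded from $Y_T$ to $X_T$ with norm $\le C T^{1/2}$; and the multiplication $f \mapsto g(\omega(\cdot + t_0)) f$ is bounded on $Y_T$ with norm $\le \|g\|_{L^\infty} = \|g\|_{L_t^\infty}$, a bound uniform in $\omega$ and $t_0$ (this is the point already noted in the proof of Theorem \ref{th.2}, that the $Y_T$-norms let one pull out $\|g\|_{L_t^\infty}$ as a coefficient). Hence the family of operators
\[
T_{\omega, t_0}(f) := \int_0^t \big(g(\omega(t'+t_0)) - m(g)\big) S(t-t') f(t')\,dt'
\]
is uniformly bounded from $Y_T$ to $X_T$, with bound $\le C(1 + |m(g)|/\|g\|_{L^\infty})\,T^{1/2}\|g\|_{L_t^\infty}$ independent of $\omega, t_0$. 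Given $f \in Y_T$ and $\varepsilon > 0$, pick $f_\varepsilon$ in the dense smooth class with $\|f - f_\varepsilon\|_{Y_T} < \varepsilon$; then
\[
\|T_{\omega,t_0}(f)\|_{X_T} \le \|T_{\omega,t_0}(f - f_\varepsilon)\|_{X_T} + \|T_{\omega,t_0}(f_\varepsilon)\|_{X_T} \le C\varepsilon + \|T_{\omega,t_0}(f_\varepsilon)\|_{X_T},
\]
and the last term tends to $0$ as $|\omega| \to \infty$ by Step 1, uniformly in $t_0$; since $\varepsilon$ is arbitrary this closes the argument.

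I expect the main obstacle to be Step 1, specifically verifying that for $f$ in the smooth class the curve $t' \mapsto S(t-t')f(t')$ is differentiable in $t'$ with a derivative whose $X_T$-relevant norms (the mixed $L_x^p L_T^q$ norms, the $L_x^\infty L_T^2$ smoothing norms, the maximal-function norm $L_x^4 L_T^\infty$) are finite and controlled — i.e. that the integration by parts is legitimate and each resulting term genuinely lives in $X_T$. This requires knowing $\partial_{t'}\big[S(t-t')f(t')\big] = S(t-t')\big(\partial_{t'}f(t') - \partial_x^3 f(t')\big)$ and that $f$ (together with several $x$-derivatives, in view of \eqref{n.11}) still satisfies the hypotheses of the linear estimates in Lemmas \ref{lema2.1}--\ref{lema2.3}; this is where the choice of dense class (Schwartz in $x$, $C_c^\infty$ in $t'$, bounded away from $t'=0$ if one wants to avoid boundary subtleties) matters. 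A small additional care is needed to keep every estimate uniform in $t_0 \in \R$, which follows because $H$ is $L$-periodic so $\|H(\omega(\cdot+t_0))\|_{L^\infty} = \|H\|_{L^\infty}$ regardless of $t_0$; the uniformity in $t_0$ then propagates through the density argument unchanged.
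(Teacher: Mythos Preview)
The paper does not give its own proof of this lemma; it defers to the authors' earlier work \cite{CPS1} on the critical KdV equation, where the identical statement (with the same spaces $X_T$, $Y_T$) is proved. Your argument---uniform boundedness of the operators $T_{\omega,t_0}:Y_T\to X_T$ from Lemma~\ref{lema2.4}, combined with convergence on a dense smooth class via integration by parts against the bounded periodic antiderivative $H$ of $g-m(g)$---is correct and is precisely the standard route used in \cite{CPS1} (and in \cite{CS} for the Schr\"odinger analogue), so your proposal coincides with the paper's referenced proof.
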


With the similar argument as in the case of the  critical KdV equation (see \cite{CPS1}), we have the following convergence result.
\begin{lemma}\label{lem.2}
Let the initial data $\phi \in H^1(\R)$. Let $u_{\omega, t_0}$ be the maximal solution of the IVP \eqref{ivp1}. Suppose $U$ be the maximal solution of the IVP \eqref{ivp3} defined in $[0, S_{max})$. Let $0<T<S_{max}$ and let $u_{\omega, t_0}$ exists in $[0, T]$ for $|\omega|$ large and that
\begin{equation}\label{x.1}
\limsup_{|\omega|\to \infty}\sup_{t_0\in \R}\|u_{\omega, t_0}\|_{L_T^{\infty}H^1(\R)}<\infty,
\end{equation}
and
\begin{equation}\label{x.1.1}
\limsup_{|\omega|\to \infty}\sup_{t_0\in \R}\|u_{\omega, t_0}\|_{L_x^4L_T^{\infty}}<\infty.
\end{equation}
Then, for all $t \in [0, T]$,
\begin{equation}\label{x.2}
\sup_{t_0\in \R}\|u_{\omega, t_0}-U\|_{X_T}\to 0 , \quad {\text{as}}\;\; |\omega|\to \infty.
\end{equation}
In particular, $u_{\omega, t_0}\to U$ as $|\omega|\to \infty$, in $H^1(\R)$.
\end{lemma}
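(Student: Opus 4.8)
The plan is to compare the Duhamel formulations of $u_{\omega,t_0}$ and $U$ on $[0,T]$ and close a Gronwall-type estimate by splitting $[0,T]$ into finitely many short subintervals. Writing
\[
u_{\omega,t_0}(t)=S(t)\phi-\int_0^tS(t-t')\,g(\omega(t'+t_0))\,\partial_x\bigl(u_{\omega,t_0}^{k+1}\bigr)(t')\,dt',\qquad
U(t)=S(t)\phi-m(g)\int_0^tS(t-t')\,\partial_x\bigl(U^{k+1}\bigr)(t')\,dt',
\]
one subtracts and isolates the oscillation error
\[
R_{\omega,t_0}(t):=m(g)\int_0^tS(t-t')\,\partial_x\bigl(U^{k+1}\bigr)(t')\,dt'-\int_0^tS(t-t')\,g(\omega(t'+t_0))\,\partial_x\bigl(U^{k+1}\bigr)(t')\,dt',
\]
so that $u_{\omega,t_0}-U=R_{\omega,t_0}-\int_0^tS(t-t')\,g(\omega(t'+t_0))\,\partial_x\bigl(u_{\omega,t_0}^{k+1}-U^{k+1}\bigr)(t')\,dt'$. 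Since $\|U\|_{X_T}\leq C\|\phi\|_{H^1}$ by the remark following Theorem~\ref{th.2}, Lemma~\ref{lema2.5} gives $\partial_x(U^{k+1})\in Y_T$, and Lemma~\ref{lem.1} then yields $\|R_{\omega,t_0}\|_{X_T}\to0$ as $|\omega|\to\infty$; as the proof of Lemma~\ref{lem.1} uses only the period of $g$ and a modulus-of-continuity estimate, this convergence is uniform in $t_0$.

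Next I would prepare two ingredients. The first is a Lipschitz analogue of Lemma~\ref{lema2.5}: writing $u^{k+1}-v^{k+1}=(u-v)\sum_{j=0}^ku^jv^{k-j}$ and repeating the H\"older and Leibniz-rule estimates of \eqref{eq2.13}--\eqref{eq2.14} factor by factor, with $u-v$ and its derivatives occupying the role of a single factor, one gets
\[
\bigl\|\partial_x\bigl(u^{k+1}-v^{k+1}\bigr)\bigr\|_{Y_T}\leq C\bigl(\|u\|_{X_T}+\|v\|_{X_T}\bigr)^k\|u-v\|_{X_T}.
\]
The second is a uniform bound on the \emph{full} $X_T$-norm of $u_{\omega,t_0}$: the hypotheses \eqref{x.1}--\eqref{x.1.1} control only the $L_T^\infty H^1$ and $L_x^4L_T^\infty$ pieces of it, but by the quantitative local theory (Theorem~\ref{th.2} and the lifespan \eqref{tempex}) data of $H^1$-norm $\leq M$ generate a solution with $X$-norm $\leq CM$ on a time window of length $\tau_0\sim\|g\|_{L_t^\infty}^{-2}M^{-2k}$; fixing $M$ to be a bound for $\limsup_{|\omega|\to\infty}\sup_{t_0}\|u_{\omega,t_0}\|_{L_T^\infty H^1}$ (finite by \eqref{x.1}) and invoking uniqueness on each piece of a partition of $[0,T]$ into $\lceil T/\tau_0\rceil$ intervals, one concludes $\sup_{t_0}\|u_{\omega,t_0}\|_{X_T}\leq M_1<\infty$ for $|\omega|$ large.

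With $M_1$ fixed, choose a partition $0=\sigma_0<\dots<\sigma_N=T$ with $\sigma_\ell-\sigma_{\ell-1}\leq\tau$, where $\tau=\tau(M_1,\|\phi\|_{H^1},\|g\|_{L_t^\infty})>0$ is small enough that $C\tau^{1/2}\|g\|_{L_t^\infty}(M_1+C\|\phi\|_{H^1})^k\leq\frac12$. On $I_1=[\sigma_0,\sigma_1]$, estimate \eqref{eq2.5} together with the Lipschitz bound above lets one absorb the nonlinear difference term on the left, so $\|u_{\omega,t_0}-U\|_{X_{I_1}}\leq2\|R_{\omega,t_0}\|_{X_{I_1}}\to0$ uniformly in $t_0$. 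On $I_{\ell+1}$ one restarts the Duhamel representation from $\sigma_\ell$: the extra homogeneous term $S(\cdot-\sigma_\ell)\bigl(u_{\omega,t_0}(\sigma_\ell)-U(\sigma_\ell)\bigr)$ is controlled in $X_{I_{\ell+1}}$ by $C_0\|u_{\omega,t_0}(\sigma_\ell)-U(\sigma_\ell)\|_{H^1}\leq C_0\|u_{\omega,t_0}-U\|_{X_{I_\ell}}$ via \eqref{eq2.5.1}, the nonlinear difference term is again absorbed, and, after a time translation, Lemma~\ref{lem.1} controls the remaining oscillation error on $I_{\ell+1}$; hence $\|u_{\omega,t_0}-U\|_{X_{I_{\ell+1}}}\leq 2C_0\|u_{\omega,t_0}-U\|_{X_{I_\ell}}+\varepsilon_\ell(\omega,t_0)$ with $\sup_{t_0}\varepsilon_\ell(\omega,t_0)\to0$ as $|\omega|\to\infty$. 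By induction every $\|u_{\omega,t_0}-U\|_{X_{I_\ell}}\to0$ uniformly in $t_0$, and since $N$ is finite and the norms comprising $X_T$ localize subadditively over the partition, $\sup_{t_0}\|u_{\omega,t_0}-U\|_{X_T}\to0$. Finally $\|\cdot\|_{L_T^\infty H^1}\leq\|\cdot\|_{X_T}$ and both $u_{\omega,t_0}$ and $U$ are continuous $H^1$-valued, so the convergence holds in $C([0,T];H^1)$, in particular in $H^1(\R)$ for each $t$.

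The step I expect to be the main obstacle is that the comparison cannot be closed on all of $[0,T]$ in one stroke, because $T$ may be far larger than the local existence time; the subinterval iteration works only after the two hypothesized bounds have been upgraded to control of the full $X_T$-norm of $u_{\omega,t_0}$, and every constant in the scheme — the number $N$ of subintervals, the length $\tau$, the propagation factor $C_0$, and the rate at which the oscillation errors vanish — must be kept uniform in $t_0$ and in $|\omega|$. Securing that uniformity is precisely what forces the use of the explicit lifespan \eqref{tempex} and of the uniform-in-$t_0$ form of Lemma~\ref{lem.1}.
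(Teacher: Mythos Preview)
Your argument is correct, but it follows a genuinely different route from the paper's own proof.

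The paper does \emph{not} partition $[0,T]$ into short subintervals. Instead it closes the estimate on the whole interval in one stroke by exploiting an $L^\infty$-versus-$L^2$ gain: using the dual smoothing estimate \eqref{eq2.2} and the hypotheses \eqref{x.1}--\eqref{x.1.1} directly (without first upgrading to full $X_T$ control), the paper obtains
\[
\|u_{\omega,t_0}-U\|_{L_T^\infty H^1}\leq CA\,\|u_{\omega,t_0}-U\|_{L_T^2 H^1}+C_\omega,
\]
with $C_\omega\to0$ coming from Lemma~\ref{lem.1}. The point is that the right-hand side involves the weaker $L_T^2$ time norm, so the abstract Gronwall-type Lemma~\ref{apn.1} (with $q=\infty$, $p=2$) immediately gives $\|u_{\omega,t_0}-U\|_{L_T^\infty H^1}\leq K C_\omega\to0$. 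Only afterwards does the paper bootstrap to the remaining $X_T$-norms, via the admissible-triple estimates \eqref{eq2.6}, \eqref{cps13}, \eqref{cps14}.

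Your approach trades the use of Lemma~\ref{apn.1} for a more hands-on contraction/iteration: you first spend the hypothesis \eqref{x.1} together with the quantitative lifespan \eqref{tempex} to manufacture a uniform bound on the \emph{full} $X_T$-norm of $u_{\omega,t_0}$ (so that \eqref{x.1.1} becomes redundant in your scheme), then choose subintervals short enough that the $T^{1/2}$ factor in \eqref{eq2.5} makes the nonlinear difference absorbable, and finally induct. This is perfectly valid and arguably more elementary, since it avoids the clever $L^q$-$L^p$ closure lemma; the price is the extra bookkeeping (upgrading the hypotheses, subadditivity of the $X_T$-norm over the partition, tracking the geometric propagation factor $(2C_0)^N$, and invoking the uniform-in-$t_0$ form of Lemma~\ref{lem.1} on each translated subinterval). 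The paper's route is shorter and explains why the specific hypotheses \eqref{x.1}--\eqref{x.1.1} were singled out: they are exactly the quantities that appear when one estimates $\|u_{\omega,t_0}^{k+1}-U^{k+1}\|_{L_x^1L_T^2}$ via the smoothing estimate.
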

\begin{proof} Since $u_{\omega, t_0}$ and $U$ have the same initial data $\phi$, from Duhamel's formula, we have
\begin{equation}\label{x.3}
\begin{split}
u_{\omega, t_0} -U & = \int_0^t g(\omega(t'+t_0))S(t-t')\partial_x(u_{\omega, t_0}^{k+1}) dt' - m(g)\int_0^t S(t-t')\partial_x(U^{k+1})dt'\\
& = \int_0^t g(\omega(t'+t_0))S(t-t')\partial_x(u_{\omega, t_0}^{k+1}-U^{k+1}) dt'\\
&\qquad +\int_0^t[g(\omega(t'+t_0)) -m(g)]S(t-t')\partial_x(U^{k+1})dt'\\
&=: I_1+I_2.
\end{split}
\end{equation}

We note that
\begin{equation}\label{mm.1}
|u^{k+1}-v^{k+1}|\leq C(|u|^k+|v|^k)|u-v|
\end{equation}
 and
 \begin{equation}\label{mm.2}
 |\partial_x(u^{k+1}-v^{k+1})|\leq C\big[(|u|^k+|v|^k)|\partial_x(u-v)| + (|\partial_xu|+|\partial_xv|)(|u|^{k-1}+|v|^{k-1})|u-v|\big].
 \end{equation}

Let $\|g\|_{L_T^{\infty}}\leq A$. Use of  \eqref{eq2.2}, \eqref{mm.1}, H\"older's inequality and the assumptions \eqref{x.1} and \eqref{x.1.1}, yield
\begin{equation}\label{x.4}
\begin{split}
\|I_1\|_{L_T^{\infty}L_x^2} &\leq C\|g\|_{L_T^{\infty}} \|u_{\omega, t_0}^{k+1} -U^{k+1}\|_{L_x^{1}L_T^2}\\
& \leq CA\|u_{\omega, t_0}^k(u_{\omega, t_0}-U)\|_{L_x^{1}L_T^2} +\|U^k(u_{\omega, t_0} -U)\|_{L_x^{1}L_T^2}\\
&\leq CA \|u_{\omega, t_0}^k\|_{L_x^2L_T^{\infty}}\|u_{\omega, t_0}-U\|_{L_x^{2}L_T^2} +\|U^k\|_{L_x^2L_T^{\infty}}\|u_{\omega, t_0} -U\|_{L_x^{2}L_T^2}\\
&\leq CA \Big[\|u_{\omega, t_0}^{k-2}\|_{L_x^{\infty}L_T^{\infty}}\|u_{\omega, t_0}^{2}\|_{L_x^2L_T^{\infty}} +\|U^{k-2}\|_{L_x^{\infty}L_T^{\infty}}\|U^2\|_{L_x^2L_T^{\infty}}\Big]\|u_{\omega, t_0} -U\|_{L_T^2L_x^{2}}\\
&\leq CA \Big[\|u_{\omega, t_0}\|_{L_T^{\infty}H^1(\R)}^{k-2}\|u_{\omega, t_0}\|_{L_x^4L_T^{\infty}}^2 +\|U\|_{L_T^{\infty}H^1(\R)}^{k-2}\|U\|_{L_x^4L_T^{\infty}}^2\Big]\|u_{\omega, t_0} -U\|_{L_T^2L_x^{2}}\\
&\leq CA\|u_{\omega, t_0} -U\|_{L_T^2L_x^{2}}.
\end{split}
\end{equation}

Again, using \eqref{eq2.2} and \eqref{mm.2},  one can obtain
\begin{equation}\label{msx.1}
\begin{split}
\|\partial_xI_1\|_{L_T^{\infty}L_x^2}& \leq CA \|\partial_x(u_{\omega, t_0}^{k+1} -U^{k+1})\|_{L_x^{1}L_T^2}\\
&\leq CA\Big[\|(|u_{\omega, t_0}|^k+|U|^k)\partial_x(u_{\omega, t_0}-U)\|_{L_x^{1}L_T^2}\\
&\qquad\qquad\quad + \|(|\partial_xu_{\omega, t_0}|+|\partial_xU|)(|u_{\omega, t_0}|^{k-1}+|U|^{k-1})(u_{\omega, t_0}-U)\|_{L_x^{1}L_T^2}\Big]\\
&=: CA[J_{1}+J_{2}].
\end{split}
\end{equation}

With the same argument as in \eqref{x.4}
\begin{equation}\label{msx.3}
J_1\leq C\|\partial_x(u_{\omega, t_0} -U)\|_{L_T^2L_x^{2}}.
\end{equation}

Now we move to estimate the first term, $\|u_{\omega, t_0}^{k-1}\partial_xu_{\omega, t_0}(u_{\omega, t_0}-U)\|_{L_x^{1}L_T^2}$ in $J_2$, the estimates for the other terms are similar. We have,
\begin{equation}\label{msx.7}
\begin{split}
\|u_{\omega, t_0}^{k-1}\partial_xu_{\omega, t_0}(u_{\omega, t_0}-U)\|_{L_x^{1}L_T^2}
&\leq C\|u_{\omega, t_0}^2\|_{L_x^2L_T^{\infty}}\|u_{\omega, t_0}^{k-3}\partial_xu_{\omega, t_0}(u_{\omega, t_0}-U)\|_{L_x^2L_T^2}\\
%&\leq C\|u_{\omega, t_0}\|_{L_x^4L_T^{\infty}}^2\|u_{\omega, t_0}
%\partial_xu_{\omega, t_0}(u_{\omega, t_0}-U)\|_{L_T^2L_x^2}\\
&\leq C\|u_{\omega, t_0}\|_{L_x^4L_T^{\infty}}^2\|u_{\omega, t_0}^{k-3}\|_{L_T^{\infty}L_x^{\infty}} \|\partial_xu_{\omega, t_0}\|_{L_T^{\infty}L_x^2}\|(u_{\omega, t_0}-U)\|_{L_T^2L_x^{\infty}}\\
&\leq C \|u_{\omega, t_0}\|_{L_x^4L_T^{\infty}}^2\|u_{\omega, t_0}\|_{L_T^{\infty}H^1(\R)}^{k-2} \|(u_{\omega, t_0}-U)\|_{L_T^2H^1(\R)}\\
&\leq C \|(u_{\omega, t_0}-U)\|_{L_T^2H^1(\R)}.
\end{split}
\end{equation}

Inserting \eqref{msx.3} and \eqref{msx.7} in \eqref{msx.1}, we get
\begin{equation}\label{msx.8}
\|\partial_xI_1\|_{L_T^{\infty}L_x^2}\leq CA\|(u_{\omega, t_0}-U)\|_{L_T^2H^1(\R)}.
\end{equation}

Combining \eqref{x.4} and \eqref{msx.8}, we obtain
\begin{equation}\label{msx.9}
\|I_1\|_{L_T^{\infty}H^1(\R)}\leq CA\|(u_{\omega, t_0}-U)\|_{L_T^2H^1(\R)}.
\end{equation}

From Lemma \ref{lem.1}, we have
\begin{equation}\label{x.5}
\|I_2\|_{L_T^{\infty}H^1(\R)}\leq C_{\omega}\to 0, \quad \mbox{as} \;\; |\omega|\to \infty.
\end{equation}

Therefore, we have
\begin{equation}\label{msx.10}
\|u_{\omega, t_0}-U\|_{L_T^{\infty}H^1(\R)}\leq CA\|(u_{\omega, t_0}-U)\|_{L_T^2H^1(\R)} +C_{\omega}.
\end{equation}

Applying Lemma \ref{apn.1} in \eqref{msx.10}, we get
\begin{equation}\label{msx.4}
\|u_{\omega, t_0} -U\|_{L_T^{\infty}H^1(\R)}\leq KC_{\omega} \to 0, \qquad as \; |\omega|\to \infty.
\end{equation}

From \eqref{msx.10} and \eqref{msx.4}, it is easy to conclude that
\begin{equation}\label{eq3.28}
\|(u_{\omega, t_0}-U)\|_{L_T^2H^1(\R)} \to 0,\quad as\; |\omega|\to \infty.
\end{equation}

Now, we move to estimate the other norms involved in the definition of $X_T$. Let, 
$$\mathfrak{L_1}:= \|\partial_x(u_{\omega, t_0} -U)\|_{L_x^{\infty}L_T^{2}} +\|\partial_x^2(u_{\omega, t_0} -U)\|_{L_x^{\infty}L_T^{2}}+\|u_{\omega, t_0} -U\|_{L_x^{5}L_T^{10}}+\|D_x(u_{\omega, t_0} -U)\|_{L_x^{5}L_T^{10}}$$
and
$$\mathfrak{L_2}:=\|\partial_x(u_{\omega, t_0} -U)\|_{L_x^{20}L_T^{5/2}}+\|u_{\omega, t_0} -U\|_{L_x^{4}L_T^{\infty}}.$$

Use of \eqref{eq2.2}, \eqref{eq2.3}, the estimate \eqref{eq2.6} from Proposition \ref{kpv-7} with admissible triples $(p_1,q_1, \alpha_1)=(5,10,0)$, and $(p_2,q_2, \alpha_2)=(\infty,2,1)$  in \eqref{x.3}, yields
\begin{equation}\label{cps11}
\mathfrak{L_1} \le CA \|\partial_x(u_{\omega, t_0}^{k+1} -U^{k+1})\|_{L_x^{1}L_T^2} +CA \|u_{\omega, t_0}^{k+1} -U^{k+1}\|_{L_x^{1}L_T^2} +\|I_2\|_{X_T}.
\end{equation}
Therefore,  with the same argument as in \eqref{x.4}-\eqref{msx.8}, we can obtain
\begin{equation}\label{eq3.30}
\mathfrak{L_1} \le  CA \|u_{\omega, t_0} -U\|_{L_T^2 H^{1}} +C_{\omega}. 
\end{equation}

Hence, using Lemma \ref{lem.2} and \eqref{eq3.28} we get from \eqref{eq3.30} that
\begin{align}\label{cps10}
\mathfrak{L_1} \stackrel{|\omega| \to \infty}{\rightarrow} 0.
\end{align}

Finally, to estimate  $\mathfrak{L_2}$ we use Proposition \ref{kpv-7} with admissible triples $(p_1,q_1, \alpha_1)=(20,5/2,3/4)$ and $(p_2,q_2, \alpha_2)=(20/3,5,1/4)$, to get
\begin{equation}\label{cps13}
\Big\|\partial_x\int_0^{t}S(t-t')f(\cdot,t')dt'\Big\|_{L_x^{20}L_T^{5/2}} \leq C\|f\|_{L_x^{20/17} L_T^{5/4}},
\end{equation}
and with admissible triples $(p_1,q_1, \alpha_1)=(4,\infty,-1/4)$, and $(p_2,q_2, \alpha_2)=(20/3,5,1/4)$, to have
\begin{equation}\label{cps14}
\Big\|\int_0^{t}S(t-t')f(\cdot,t')dt'\Big\|_{L_x^{4}L_T^\infty} \leq C\|f\|_{L_x^{20/17} L_T^{5/4}}.
\end{equation}

Using \eqref{cps13}, \eqref{cps14}, and the definition of $X_T$, we get from \eqref{x.3} that
\begin{equation}
\label{cps15}
\mathfrak{L_2} \le C A \|\partial_x(u_{\omega, t_0}^{k+1} -U^{k+1})\|_{L_x^{20/17}L_T^{5/4}}+\|I_2\|_{X_T}
\end{equation}

Using  \eqref{mm.2},  we can obtain
\begin{equation}\label{eq3.35}
\begin{split}
\|\partial_x(u_{\omega, t_0}^{k+1} -U^{k+1})\|_{L_x^{20/17}L_T^{5/4}}
&\leq C\Big[\|(|u_{\omega, t_0}|^k+|U|^k)\partial_x(u_{\omega, t_0}-U)\|_{L_x^{20/17}L_T^{5/4}}\\
&\qquad+ \|(|\partial_xu_{\omega, t_0}|+|\partial_xU|)(|u_{\omega, t_0}|^{k-1}+|U|^{k-1})(u_{\omega, t_0}-U)\|_{L_x^{20/17}L_T^{5/4}}\Big]\\
&=: C[\tilde{J}_{1}+\tilde{J}_{2}].
\end{split}
\end{equation}
 H\"older's inequality, the fact that $20/13>10/7$, Sobolev immersion and the assumption \eqref{x.1}, imply that
 \begin{equation}\label{eq3.36}
 \begin{split}
 \tilde{J}_1&\leq C \|\partial_x(u_{\omega, t_0} -U)\|_{L_x^{5}L_T^{10}}\{\|u_{\omega, t_0}^{k}\|_{L_x^{20/13}L_T^{10/7}}+\|U^{k}\|_{L_x^{20/13}L_T^{10/7}}\}\\
 &\leq C \|\partial_x(u_{\omega, t_0} -U)\|_{L_x^{5}L_T^{10}}\{\|u_{\omega,t_0}^{k}\|_{L_T^{10/7}L_x^{20/13}} +\|U^{k}\|_{L_T^{10/7}L_x^{20/13}}\}\\
 &\leq C \|\partial_x(u_{\omega, t_0} -U)\|_{L_x^{5}L_T^{10}} \, T^{7/10}\{\|u_{\omega, t_0}\|_{L_T^\infty H^1}^{k}+\|U\|_{L_T^\infty H^1}^{k}\}\\
 &\leq C\, T^{7/10}\|\partial_x(u_{\omega, t_0} -U)\|_{L_x^{5}L_T^{10}}.
 \end{split}
 \end{equation}
 
 An in \eqref{msx.1}, we give details in estimating the first term, 
 $\|u_{\omega, t_0}^{k-1}\partial_xu_{\omega, t_0}(u_{\omega, t_0}-U)\|_{L_x^{20/17}L_T^{5/4}}$ in $\tilde{J}_2$, the estimates for the other terms are similar. Here too, H\"older's inequality, the fact that $20/3>5$, Sobolev immersion and the assumption \eqref{x.1}, yield
\begin{equation}\label{eq3.37}
\begin{split}
\|u_{\omega, t_0}^{k-1}\partial_xu_{\omega, t_0}(u_{\omega, t_0}-U)\|_{L_x^{20/17}L_T^{5/4}} 
&\leq C \|u_{\omega, t_0}^{k-1}\|_{L_x^{20/3}L_T^{5}}\|\partial_x u_{\omega, t_0}\|_{L_x^{2}L_T^{2}}\|u_{\omega, t_0} -U\|_{L_x^{5}L_T^{10}}\\
&\leq C 
\|u_{\omega, t_0}^{k-1}\|_{L_T^{5}L_x^{20/3}}\|\partial_x u_{\omega, t_0}\|_{L_T^{2}L_x^{2}}\|u_{\omega, t_0} -U\|_{L_x^{5}L_T^{10}}\\
&\leq C \,T^{7/10}\|u_{\omega, t_0}\|_{L_T^\infty H^1}^{k}\|u_{\omega, t_0} -U\|_{L_x^{5}L_T^{10}}\\
&\leq C\, T^{7/10}\|u_{\omega, t_0} -U\|_{L_x^{5}L_T^{10}}.
\end{split}
\end{equation}
In view of \eqref{eq3.35}, \eqref{eq3.36} and \eqref{eq3.37}, we get from \eqref{cps15} that
\begin{equation}\label{cps17}
\mathfrak{L_2}\leq CA\,T^{7/10}\{\|\partial_x(u_{\omega, t_0} -U)\|_{L_x^{5}L_T^{10}}+\|u_{\omega, t_0} -U\|_{L_x^{5}L_T^{10}}\}+C_{\omega}.
\end{equation}
Therefore, Lemma \ref{lem.2} and \eqref{cps10}, imply
\begin{equation}\label{cps16}
\mathfrak{L_2}\stackrel{|\omega| \to \infty}{\rightarrow} 0.
\end{equation}

Now, the proof of the Lemma follows by combining \eqref{msx.4}, \eqref{cps10} and \eqref{cps16}.
\end{proof}

In what follows, as we did in our earlier work \cite{CPS1}, we consider the supercritical KdV equation with more general time dependent coefficient on the nonlinearity. Given $h \in L^{\infty}$ we consider
\begin{equation}\label{xav1}
  \begin{cases}
     u_t+u_{xxx}+h(t) \partial_x(u^{k+1})=0, \quad x, \, t\in
     \mathbb{R},\; k\geq 5\\
     u(x, 0)=\phi(x).
   \end{cases}
\end{equation}

The results for the IVP \eqref{xav1} and their proofs that we are going to present here are quite similar to the ones we have for the critical KdV equation in \cite{CPS1}. For the sake of clarity, we reproduce them here.
\begin{proposition}\label{xavprop1}
Given any $A>0$, there exist $\epsilon =\epsilon(A)$ and $B>0$ such that if $\|h\|_{L^{\infty}}\le A$ and if $\phi \in H^{1}(\R) $ satisfies
\begin{equation}\label{eq3.31}
\|S(t) \phi\|_{L_x^5L_t^{10}} \le \epsilon,
\end{equation}
then the corresponding solution $u$ of \eqref{xav1} is global and satisfies
\begin{equation}\label{eq3.32}
\|u \|_{L_x^5L_t^{10}} \le 2 \, \|S(t) \phi \|_{L_x^5L_t^{10}},
\end{equation}
\begin{equation}\label{eq3.33}
%|\!|\!|u|\!|\!|_{1}
\|u\|_{X_t}
\le B \| \phi\|_{H^{1}(\mathbb{R})}.
\end{equation}
Conversely, if the solution $u$ of \eqref{xav1} is global and satisfies
\begin{equation}\label{eq3.34}
\|u \|_{L_x^5L_t^{10}} \le \epsilon,
\end{equation}
then
\begin{equation}\label{xav12}
\|S(t) \phi\|_{L_x^5L_t^{10}}\le 2\|u \|_{L_x^5L_t^{10}}.
\end{equation}
\end{proposition}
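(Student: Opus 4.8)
The plan is to run a continuity (bootstrap) argument directly on the integral equation
\begin{equation*}
u(t)=S(t)\phi-\int_0^tS(t-t')\,h(t')\,\partial_x(u^{k+1})(t')\,dt',
\end{equation*}
on the maximal interval of existence $[0,S^{*})$ of the $H^1$--solution (obtained from the $h$--version of Theorem \ref{th.1}, whose proof is the one for Theorem \ref{th.2} with $g(\omega(\cdot+t_0))$ replaced by $h$), tracking simultaneously two quantities: the ``small'' one $\rho(T):=\|u\|_{L_x^5L_T^{10}}$ and the full norm $b(T):=\|u\|_{X_T}$. Both are finite for $T$ small by the local theory, are continuous and nondecreasing in $T$, and $\rho(0)=0$. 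I would aim to prove: \emph{as long as $\rho(T)\le 2\|S(t)\phi\|_{L_x^5L_t^{10}}$ one has $b(T)\le B\|\phi\|_{H^1}$, and this in turn keeps $\rho(T)\le 2\|S(t)\phi\|_{L_x^5L_t^{10}}$}. Since then $\|u(T)\|_{H^1}\le b(T)$ stays bounded, the blow--up alternative forces $S^{*}=\infty$; passing to the limit one gets $\rho(\infty)\le 2\|S(t)\phi\|_{L_x^5L_t^{10}}$ and $b(\infty)\le B\|\phi\|_{H^1}$, which are exactly \eqref{eq3.32}--\eqref{eq3.33}.

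The linear ingredients would be the \emph{global in time} estimates, i.e.\ those with no $T^{1/2}$ loss (so \emph{not} \eqref{eq2.5}): the homogeneous estimate \eqref{241}, the inhomogeneous smoothing estimates \eqref{eq2.2}--\eqref{eq2.3}, Proposition \ref{kpv-7} with the very same admissible triples already used in the proof of Lemma \ref{lem.2}, and the maximal--function and Strichartz estimates of Lemmas \ref{lema2.2}--\ref{lema2.3}. The key nonlinear input is a refinement of Lemma \ref{lema2.5} in which the smallness is extracted explicitly: splitting $u^{k+1}=u^{4}\cdot u^{k-4}$ (or $u^{5}\cdot u^{k-4}$ for the terms estimated through \eqref{eq2.2}), one places a block of $u$'s into the $L_x^5L_T^{10}$--norm — as many as Hölder and the chosen Duhamel estimate allow — while the leftover $u^{k-4}$ goes into $L_x^{\infty}L_T^{\infty}$, controlled by $\|u\|_{L_T^{\infty}H^1}\le b(T)$ through $H^1(\R)\hookrightarrow L^{\infty}(\R)$, and the derivative goes into $\|\partial_xu\|_{L_x^5L_T^{10}}$ or $\|\partial_xu\|_{L_x^{\infty}L_T^2}$ (both $\le b(T)$); with analogous splittings for the remaining norms in \eqref{n.11} one arrives, schematically, at
\begin{equation*}
\Big\|\int_0^tS(t-t')\,h(t')\,\partial_x(u^{k+1})(t')\,dt'\Big\|_{X_T}\le C\,A\,\|u\|_{L_x^5L_T^{10}}^{4}\,b(T)^{k-3}.
\end{equation*}

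Feeding this into the integral equation gives $b(T)\le C_0\|\phi\|_{H^1}+CA\,\rho(T)^{4}b(T)^{k-3}$ and $\rho(T)\le\|S(t)\phi\|_{L_x^5L_t^{10}}+CA\,\rho(T)^{4}b(T)^{k-3}$. On the sub-interval where $\rho(T)\le 2\|S(t)\phi\|_{L_x^5L_t^{10}}\le 2\epsilon$, the first inequality — by an absorption argument (genuinely linear in $b(T)$ when $k=4$, as in \cite{CPS1}, and replaced by a continuity bootstrap on $b(T)\le 2C_0\|\phi\|_{H^1}$ when $k\ge 5$) — yields $b(T)\le 2C_0\|\phi\|_{H^1}=:B\|\phi\|_{H^1}$ once $\epsilon$ is small; plugging this back into the $\rho$--inequality gives $\rho(T)<2\|S(t)\phi\|_{L_x^5L_t^{10}}$, so the set of admissible $T$ is open and closed in $[0,S^{*})$, hence equal to it. (Lemma \ref{apn.1} can be used to upgrade the estimates obtained for each fixed $t$ to estimates on the whole time interval, exactly as in the proof of Lemma \ref{lem.2}.) For the converse I would use $S(t)\phi=u(t)+\int_0^tS(t-t')h(t')\partial_x(u^{k+1})(t')\,dt'$: first the same bootstrap, now with $\|u\|_{L_x^5L_t^{10}}\le\epsilon$ \emph{given}, yields $b(\infty)\le B\|\phi\|_{H^1}$, and then the refined nonlinear estimate bounds the Duhamel term by $CA\,\|u\|_{L_x^5L_t^{10}}^{4}b(\infty)^{k-3}\le\|u\|_{L_x^5L_t^{10}}$ for $\epsilon$ small, which is \eqref{xav12}.

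The main obstacle will be precisely this refined nonlinear estimate together with the tracking of constants: one must distribute the $k+1$ factors of $u^{k+1}$ so that (i) every resulting term lands in a dual space for which a $T$--independent Duhamel bound is available, and (ii) the power of $\|u\|_{L_x^5L_T^{10}}$ peeled off is large enough that the nonlinear contribution is subordinate — the delicate feature, absent in the critical case of \cite{CPS1}, being the $L^{\infty}$--type factors $u^{k-4}$ forced by the supercriticality $k\ge 5$. Handling those and keeping the dependence of $\epsilon$ (and $B$) on the data under control is where I expect the real work: one trades the surviving powers of $\|\phi\|_{H^1}$ against powers of $\epsilon$ using conservation of the $L^2$--norm, $\|S(t)\phi\|_{L_x^5L_t^{10}}\le C\|\phi\|_{L^2}=C\|u(t)\|_{L^2}$ by \eqref{241}, following the bookkeeping of \cite{CPS1} (critical KdV) and \cite{CS} (NLS).
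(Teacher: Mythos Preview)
Your bootstrap strategy on the pair $(\rho(T),b(T))$ is exactly what the paper does, and the refined nonlinear estimate you aim for --- placing four or five copies of $u$ into $L_x^5L_T^{10}$, the derivative into an $X_T$--norm, and the remaining $u^{k-4}$ into $L_x^\infty L_T^\infty$ via $H^1\hookrightarrow L^\infty$ --- is precisely the paper's computation (compare \eqref{xav15}, \eqref{0xav27}, \eqref{1xav27}). You are in fact \emph{more} careful than the paper: the paper writes the Duhamel bound as $CA\,\rho(T)^4\,b(T)$ and silently absorbs the factor $\|u\|_{L_T^\infty H^1}^{k-4}$ into the constant $C$ (see the last step in \eqref{xav15} and in \eqref{1xav27}), whereas you keep it as $b(T)^{k-3}$ and correctly observe that for $k\ge 5$ this forces a second, simultaneous bootstrap on $b(T)$ rather than a linear absorption.

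The one place your proposal goes astray is the final paragraph. The idea of ``trading the surviving powers of $\|\phi\|_{H^1}$ against powers of $\epsilon$'' via $L^2$--conservation and \eqref{241} does not work: the hypothesis $\|S(t)\phi\|_{L_x^5L_t^{10}}\le\epsilon$ gives no control whatsoever on $\|\phi\|_{H^1}$ (the inequality \eqref{241} goes the wrong way), so the residual factor $(2C_0\|\phi\|_{H^1})^{k-4}$ in your bootstrap inequality for $b(T)$ cannot be absorbed into a $\phi$--independent $\epsilon$. The honest conclusion of your double bootstrap is that the smallness threshold must be $\epsilon=\epsilon(A,\|\phi\|_{H^1})$, not $\epsilon(A)$ alone; this is a genuine (if minor) imprecision in the paper's statement, which the paper's own proof masks by the unjustified absorption mentioned above. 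It does no harm downstream: in the applications (Corollary \ref{xavcor} and the proof of Theorem \ref{main.th2}) the relevant $H^1$--data are all bounded by a fixed $M_0$ determined by $U$, so one simply takes $\epsilon=\epsilon(A,M_0)$. State your result with that dependence rather than chase a cancellation that is not there. (Lemma \ref{apn.1} plays no role here; the paper uses only the direct continuity argument.)
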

\begin{proof}
Since $\|h\|_{L_t^{\infty}}\leq A$, as in Theorem \ref{th.2} we can prove the local well-posedness for the IVP \eqref{xav1} in $H^1(\R)$ with time of existence  $T=T(\|\phi\|_{H^1(\R)}, \|h\|_{L^{\infty}})$. Let $u\in C([0, T_{max}); H^1(\R))$ be the maximal solution of the IVP \eqref{xav1}. For $0\leq t <T_{max}$, we have that
\begin{equation}\label{xav14}
u(t)=S(t) \phi+w(t),
\end{equation} where
$$
w(t)= -\int_{0}^{t}S(t-t')h(t')\partial_x(u^{k+1})(t')\,dt'.
$$

Using \eqref{eq2.6} from Proposition \ref{kpv-7} for admissible triples $(5, 10, 0)$ and $(\infty, 2, 1)$, we obtain
\begin{equation}\label{xav15}
\begin{split}
\|w\|_{L_x^5L_T^{10}} &\leq   C A \|u^{k+1}\|_{L_x^1L_T^{2}}\leq CA\|u^{k-4}\|_{L_x^{\infty}L_T^{\infty}}\|u^5\|_{L_x^1L_T^{2}}\\
&\leq CA\|u\|_{L_T^{\infty}H^1}^{k-4}\|u\|_{L_x^5L_T^{10}}^5\leq C A \|u\|_{L_x^5L_T^{10}}^5.
\end{split}
\end{equation}

From \eqref{xav14} and \eqref{xav15} it follows that
\begin{equation}\label{xav16}
|\,\|u \|_{L_x^5L_T^{10}}-\|S(t) \phi \|_{L_x^5L_T^{10}}|\leq  C A \|u\|_{L_x^5L_T^{10}}^5.
\end{equation}

Thus, for all $T \in (0, T_{max})$ one has
\begin{equation}\label{eq3.41}
\|u\|_{L_x^5L_T^{10}}\le \epsilon + C A \|u\|_{L_x^5L_T^{10}}^5.
\end{equation}

Choose $\epsilon=\epsilon(A)$ such that
\begin{equation}
\label{eq3.42}
C A (2\epsilon)^4 <1/2,
\end{equation}
and suppose that the estimate \eqref{eq3.31} holds.  As the norm is  continuous on $T$ and vanishes at $T=0$,  using continuity argument, the estimate \eqref{eq3.41} and the  choice of $\epsilon$ in \eqref{eq3.42}, imply that
\begin{equation}\label{eq3.43}
\|u \|_{L_x^5L^{10}_{T_{max}}}\le 2 \epsilon.
\end{equation}
Moreover, from \eqref{xav16}
\begin{equation}\label{eq39}
\begin{split}
\|u\|_{L_x^5L^{10}_{T_{max}}} & \le  \|S(t) \phi\|_{L_x^5L^{10}_{T_{max}}} + C A \|u\|_{L_x^5L^{10}_{T_{max}}}^5\\
&\leq\|S(t) \phi\|_{L_x^5L^{10}_{T_{max}}} +C A (2\epsilon)^4\|u\|_{L_x^5L^{10}_{T_{max}}}.
\end{split}
\end{equation}

Therefore,  with the choice of $\epsilon$ satisfying \eqref{eq3.42},  the estimate \eqref{eq39} yields
\begin{equation}\label{eq3.45}
\|u\|_{L_x^5L_{T_{max}}^{10}}\leq 2 \|S(t) \phi\|_{L_x^5L_{T_{max}}^{10}}.
\end{equation}

In what follows, we will show that $T_{max} = \infty$. The inequalities \eqref{eq2.2}, \eqref{eq2.3}, \eqref{eq2.6} with admissible triples $(5,10,0)$ and $(\infty, 2, 1)$, and H\"older's inequality imply
\begin{equation}\label{0xav27}
\|w\|_{L_T^{\infty}H^1}+ \|\partial_xw\|_{L_x^{\infty}L_T^2} +
\|\partial_x^2w\|_{L_x^{\infty}L_T^2}
 + \|w\|_{L_x^{5}L_T^{10}}+  \|\partial_xf\|_{L_x^{5}L_T^{10}}
 \le C A\|u\|_{L_x^5L_{T}^{10}}^4\| u\|_{X_T}.
\end{equation}

Now using \eqref{cps13}, \eqref{cps14} and H\"older's inequality, we have
\begin{align}\label{1xav27}
\|\partial_x w\|_{L_x^{20}L_T^{5/2}}+\| w\|_{L_x^{4}L_T^{\infty}} \le & C A \|\partial_x (u^{k+1})\|_{L_x^{20/17}L_T^{5/4}}\nonumber \\
\le & CA \|u^k\|_{L_x^{5/4}L_T^{5/2}}\|\partial_x u\|_{L_x^{20}L_T^{5/2}}\nonumber\\
\le & CA \|u^{k-4}\|_{L_x^{\infty}L_T^{\infty}}\|u^4\|_{L_x^{5/4}L_T^{5/2}}\|\partial_x u\|_{L_x^{20}L_T^{5/2}}\nonumber\\
\le & CA \|u\|_{L_T^{\infty}H^1}^{k-4}\|u\|_{L_x^{5}L_T^{10}}^4\|\partial_x u\|_{L_x^{20}L_T^{5/2}}\nonumber\\
\le & CA \|u\|_{L_x^{5}L_T^{10}}^4\|\partial_x u\|_{L_x^{20}L_T^{5/2}}.
\end{align}

Combining  \eqref{0xav27} and \eqref{1xav27}, we obtain
\begin{align}\label{xav27}
\|w\|_{X_T} \le C A \|u\|_{L_x^5L_{T}^{10}}^4 \|u\|_{X_T}.
\end{align}
This estimate with  \eqref{eq3.42} and \eqref{eq3.43} gives
\begin{equation}\label{eq3.47}
\|w\|_{X_T} \le C A(2\epsilon)^4 \|u\|_{X_T} <\frac12 \|u\|_{X_T}.
\end{equation}

Using \eqref{xav14} we obtain
\begin{equation}
\|u\|_{X_T} \le  \|S(t) \phi\|_{X_T}+  \|w\|_{X_T} \le
C\|\phi\|_{H^1(\mathbb{R})}+\frac12 \|u\|_{X_T},
\end{equation}
for all $T \in (0,T_{max})$. Therefore, we have
\begin{equation}\label{ap-21}
\|u\|_{X_{T_{max}}} \le 2\, C \| \phi\|_{H^1(\R)}.
\end{equation}
Hence, from the definition of $\|u\|_{X_{T_{max}}}$, we have that
 \begin{equation}\label{ap-22}
 \|u\|_{ L^\infty_{T_{max}}H^1(\mathbb{R})} \le C \|u(0)\|_{H^1(\R)}.
\end{equation}

Now, combining the local existence from Theorem \ref{th.2} and the estimate \eqref{ap-22}, the blow-up alternative implies that $T_{max}=\infty$. Finally, the estimates \eqref{eq3.45} and \eqref{ap-21} yield \eqref{eq3.32} and \eqref{eq3.33} respectively with $B=2C$.

Conversely, let $T_{max}=\infty$ and   \eqref{eq3.34} holds. With the similar argument as in \eqref{xav16}, we can get
\begin{equation}\label{xav51}
|\,\|u \|_{L_x^5L_t^{10}}-\|S(t) \phi \|_{L_x^5L_t^{10}}|\leq  C A \|u\|_{L_x^5L_t^{10}}^5.
\end{equation}

Thus, from \eqref{xav51} in view of \eqref{eq3.34} and \eqref{eq3.42}, one has
\begin{equation}\label{xav52}
\|S(t)\phi\|_{L_x^5L_t^{10}}\le \|u\|_{L_x^5L_t^{10}} + C A \epsilon^4\|u\|_{L_x^5L_t^{10}} \leq 2\|u\|_{L_x^5L_t^{10}}.
\end{equation}
\end{proof}

\begin{corollary}\label{xavcor}
Let $h \in L^{\infty}(\mathbb{R})$ satisfy $\|h\|_{L^{\infty}} \le A $ and  $\epsilon$ and $B$ be as in Proposition \ref{xavprop1}. Given $\phi\in H^{1}(\mathbb{R})$, let $u$ be the solution of the IVP \eqref{xav1} defined on the maximal interval $[0,T_{max})$. If there exists $T \in (0,T_{max})$ such that
$$
\|S(t) u(T)\|_{L_x^5L_t^{10}} \le \epsilon,
$$
then  the solution $u$ is global. Moreover
$$
\|u\|_{L_x^5L_{(T, \infty)}^{10}}\le 2\epsilon, \quad \textrm{and} \quad \|u\|_{X_{(T,\infty)}} \le B \|u(T)\|_{H^{1}(\R)}.
$$
\end{corollary}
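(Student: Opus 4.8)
The plan is to reduce the corollary to a direct application of Proposition \ref{xavprop1} via a time-translation argument. First I would set $\psi := u(T) \in H^1(\R)$ and note that, by hypothesis, $\|S(t)\psi\|_{L_x^5 L_t^{10}} \le \epsilon$, which is precisely the smallness condition \eqref{eq3.31} needed to invoke the proposition. The key observation is that the function $v(x,t) := u(x, t+T)$ solves the IVP \eqref{xav1} with initial data $\psi$ and with $h$ replaced by the translated coefficient $\tilde h(t) := h(t+T)$; since translation does not change the $L^\infty$ norm, $\|\tilde h\|_{L^\infty} \le A$, so the same $\epsilon = \epsilon(A)$ and $B$ from Proposition \ref{xavprop1} apply to this translated problem. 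By uniqueness in the local well-posedness theory (Theorem \ref{th.2}, adapted to the $h$-dependent equation as noted in the proof of Proposition \ref{xavprop1}), $v$ is indeed the solution of \eqref{xav1} associated to $\psi$ and $\tilde h$ on its maximal interval.

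Next I would apply Proposition \ref{xavprop1} to $v$: since $\|S(t)\psi\|_{L_x^5 L_t^{10}} \le \epsilon$, the proposition guarantees that $v$ is global, that $\|v\|_{L_x^5 L_t^{10}} \le 2\|S(t)\psi\|_{L_x^5 L_t^{10}} \le 2\epsilon$, and that $\|v\|_{X_t} \le B\|\psi\|_{H^1(\R)} = B\|u(T)\|_{H^1(\R)}$. Translating back, $v$ being global on $[0,\infty)$ means $u$ is defined on $[T, \infty)$; combined with the fact that $u$ already exists on $[0, T_{\max})$ with $T \in (0, T_{\max})$, this forces $T_{\max} = \infty$, i.e.\ $u$ is global. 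The norm identities $\|u\|_{L_x^5 L_{(T,\infty)}^{10}} = \|v\|_{L_x^5 L_t^{10}}$ and $\|u\|_{X_{(T,\infty)}} = \|v\|_{X_t}$ (both following from the definition of the mixed norms, where the $t$-integral for $v$ over $(0,\infty)$ is exactly the $t$-integral for $u$ over $(T,\infty)$) then yield the two claimed bounds $\|u\|_{L_x^5 L_{(T,\infty)}^{10}} \le 2\epsilon$ and $\|u\|_{X_{(T,\infty)}} \le B\|u(T)\|_{H^1(\R)}$.

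The only genuinely delicate point is the bookkeeping at the junction time $T$: one must make sure that the solution $u$, which a priori is only known to exist on $[0, T_{\max})$, actually extends past $T$ by gluing $u|_{[0,T]}$ with the translate of the global solution $v$, and that this glued function is still a solution in the class \eqref{mmx.0}--\eqref{mmx.5}. This is handled by the uniqueness statement in Theorem \ref{th.2} together with the blow-up alternative: the local theory shows that from data $u(T) \in H^1(\R)$ there is a unique solution, and it must coincide with $v$; since $v$ never leaves a bounded set in $H^1(\R)$ (by the $X_t$ bound), the blow-up alternative precludes $T_{\max} < \infty$. No new estimates are required beyond those already established; the corollary is purely a matter of repackaging Proposition \ref{xavprop1} under a time shift.
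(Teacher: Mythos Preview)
Your proposal is correct and is precisely the ``standard extension argument'' that the paper invokes (the paper does not spell out the details, but refers to Corollary~2.4 in \cite{CS}): one time-translates the solution, notices that the translated coefficient still has $L^\infty$ norm bounded by $A$, applies Proposition~\ref{xavprop1} to the shifted problem, and then uses uniqueness together with the blow-up alternative to conclude $T_{\max}=\infty$ and to transport the bounds back to $u$ on $(T,\infty)$. There is no substantive difference between your argument and the paper's intended one.
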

\begin{proof}
The proof follows by using a standard extension argument. For details we refer to the  proof of Corollary 2.4 in \cite{CS}.
\end{proof}

%%%%%%%%%%%%%%%%%%%%%%%%%%%%%%%%%%%%%%%%%%%%%%%%%%%%%%%%%%%%%%%%%%%%%%%%
%%%%%%%%%%%%%%%%%%%%%%%%%%%%%%%%%%%%%%%%%%%%%%%%%%%%%%%%%%%%%%%%%%%%%%%%
\secao{Proof of the main results}\label{prmt-2}
%%%%%%%%%%%%%%%%%%%%%%%%%%%%%%%%%%%%%%%%%%%%%%%%%%%%%%%%%%%%%%%%%%%%%%%
%%%%%%%%%%%%%%%%%%%%%%%%%%%%%%%%%%%%%%%%%%%%%%%%%%%%%%%%%%%%%%%%%%%%%%%%%

The argument in the proof of the main results, Theorem \ref{main.th1} and Theorem \ref{main.th2}, is quite similar to the one used in the case of the critical KdV equation \cite{CPS1}. As mentioned earlier,   Lemma \ref{lem.2} and the local existence Theorem \ref{th.2} are  used in the proof of Theorem \ref{main.th1}. While, Proposition \ref{xavprop1} and Theorem \ref{main.th1} are crucial in the proof of Theorem \ref{main.th2}. He we adapt the techniques used in \cite{CPS1} and \cite{CS} to complete the proofs.

\begin{proof}[Proof of Theorem \ref{main.th1}] Let $A=\|g\|_{L^\infty}$, $T \in (0, S_{\max})$ fixed  and set
\begin{equation}\label{eq4.1}
M_0=2 \sup_{t \in [0,T]}\|U(t)\|_{H^1(\R)}.
\end{equation}

In particular, for $t=0$, \eqref{eq4.1} gives $\|\phi\|_{H^1(\R)} \le M_0/2$. From  Theorem \ref{th.2}, we have that for all $\omega, t_0\in \R$, $u_{\omega, t_0}$ exists on $[0,\delta]$. Using \eqref{tempex} we have that the existence time $\delta$, is given by
\begin{equation}\label{tempex1}
\delta=\dfrac{C}{A^2M_0^8}.
\end{equation}

Moreover, from \eqref{pr.1}
\begin{equation}\label{eq4.2}
\limsup_{|w| \to \infty} \sup_{t_0 \in \mathbb{R}}\|u_{\omega, t_0}\|_{L_{\delta}^\infty H^1(\R)} \le C \|\phi\|_{H^1(\R)}
\end{equation}
and
\begin{equation}\label{eq4.0}
\limsup_{|w| \to \infty} \sup_{t_0 \in \mathbb{R}}\|u_{\omega, t_0}\|_{L_x^4L_{\delta}^\infty H^1(\R)} \le C \|\phi\|_{H^1(\R)}.
\end{equation}
From Lemma \ref{lem.2}, we have that $\sup_{t_0 \in \mathbb{R}}\|u_{\omega, t_0}- U\|_{X_T}\stackrel{|w| \to \infty}{\rightarrow} 0$, in particular
\begin{equation}\label{eq4.4}
\sup_{t_0 \in \mathbb{R}}\|u_{\omega, t_0}(\delta)- U(\delta)\|_{H^1(\mathbb{R})}\stackrel{|w| \to \infty}{\rightarrow} 0.
\end{equation}
Combining \eqref{eq4.1} and \eqref{eq4.4}, for $|w|$ sufficiently large, we deduce that
\begin{equation}\label{eq4.5}
\sup_{t_0 \in \mathbb{R}}\|u_{\omega, t_0}(\delta)\|_{H^1(\R)} \le M_0.
\end{equation}

 We suppose $\delta \leq T$, otherwise we are done. Using Theorem \ref{th.2} we can extend the solution $u_{\omega, t_0}$ (as in the proof of Corollary \ref{xavcor}) on the interval $[0, 2 \delta]$, with $\|\tilde{u}_{\omega, t_0}\|_{L_t^\infty(0,\delta) H^1(\R)} \le C \|\tilde{u}_{\omega, t_0}(0)\|_{H^1(\R)}$, where $\tilde{u}_{\omega, t_0}(t)=u_{\omega, t_0}(t+\delta)$ i.e., $\|u_{\omega, t_0}\|_{L_t^\infty(\delta, 2\delta) H^1(\R)} \le C \|u_{\omega, t_0}(\delta)\|_{H^1(\R)} \le C^2\|\phi\|_{H^1(\R)}$. Therefore, \eqref{eq4.2} gives
\begin{equation}
\limsup_{|w| \to \infty} \sup_{t_0 \in \mathbb{R}}\|u_{\omega, t_0}\|_{L_t^\infty(0, 2\delta) H^1(\R)} \le C(1+C)\|\phi\|_{H^1(\R)}.
\end{equation}
Similarly, from \eqref{eq4.0},
\begin{equation}
\limsup_{|w| \to \infty} \sup_{t_0 \in \mathbb{R}}\|u_{\omega, t_0}\|_{L_x^4L_{2\delta}^\infty H^1(\R)} \le C(1+C)\|\phi\|_{H^1(\R)}.
\end{equation}
So, we can again apply the Lemma \ref{lem.2}. Iterating this argument at a finite number of times with the same time of existence in each iteration, we see that
\begin{equation*}
\limsup_{|w| \to \infty} \sup_{t_0 \in \mathbb{R}}\|u_{\omega, t_0}\|_{L_T^\infty H^1(\R)} \le C\|\phi\|_{H^1(\R)}
\end{equation*}
and
\begin{equation*}
\limsup_{|w| \to \infty} \sup_{t_0 \in \mathbb{R}}\|u_{\omega, t_0}\|_{L_x^4L_T^\infty} \le C\|\phi\|_{H^1(\R)}.
\end{equation*}
The result is therefore a consequence of Lemma \ref{lem.2}.
\end{proof}

%%%%%%%%%%%%%%%%%%%%%%%%%%%%%%%%%%%%%%%%%%%%%%%%%%%%%%%%%%%%%%%%%%%%%%%%%%%

\begin{proof}[Proof of Theorem \ref{main.th2}]
Let $\epsilon \in (0,\epsilon(A))$, where $\epsilon(A)$ is as in Proposition \ref{xavprop1}. If $T$ is sufficiently large, from \eqref{eq1.20}, we have that
\begin{equation}\label{xav33}
\|U\|_{L_x^5L_{(T, \infty)}^{10}}\le \frac{\epsilon}{4}.
\end{equation}
Applying Proposition \ref{xavprop1} to the global solution $\tilde{U}(t)=U(t+T)$, the inequality \eqref{xav12} gives
\begin{equation}
\|S(t) U(T)\|_{L_x^5L_t^{10}}=\|S(t) \tilde{U}(0)\|_{L_x^5L_t^{10}} \le 2\|\tilde{U}\|_{L_x^5L_t^{10}}=2 \|U\|_{L_x^5L_{(T, \infty)}^{10}}\le \frac{\epsilon}{2}.
\end{equation}
From this inequality and Corollary \ref{xavcor} we get
\begin{equation}\label{xav19}
\|U\|_{X_{(T;\infty)}} \le B \|U(T)\|_{H^{1}(\R)}.
\end{equation}

From Theorem \ref{main.th1} it follows that
\begin{equation}\label{xav20}
\sup_{ t_0 \in \mathbb{R} }\sup_{0 \le t \le T}\|u_{\omega, t_0}(t)-U(t)\|_{H^1(\R)} \to 0, \quad \textrm{as} \quad |\omega| \to \infty.
\end{equation}

Thus, if $|w|$ is sufficiently large,  the triangular inequality along with \eqref{xav20} gives
\begin{equation}
\begin{split}
\|S(t) u_{\omega, t_0}(T)\|_{L_x^5L_t^{10}} & \le \|S(t) u_{\omega, t_0}(T)-S(t) U(T)\|_{L_x^5L_t^{10}}+ \|S(t) U(T)\|_{L_x^5L_t^{10}} \\
& \le  \|u_{\omega, t_0}(T)-U(T)\|_{L_x^2} + \frac{\epsilon}{2}\\
& \le \epsilon.
\end{split}
\end{equation}

Therefore, Corollary \ref{xavcor}  implies that $u_{\omega, t_0}$ is global. Moreover,
\begin{equation}\label{xav21}
\sup_{t_0 \in \mathbb{R}} \|u_{\omega, t_0}\|_{L_x^5L_{(T, \infty)}^{10}}\le 2\epsilon, \quad \textrm{and} \quad \|u_{\omega, t_0}\|_{X_{ (T,\infty)}} \le B \|u_{\omega, t_0}(T)\|_{H^1(\R)},
\end{equation}
for $|w|$ sufficiently large.

Let $M_0= \sup_{0\le t \le T}\|U(t)\|_{H^1(\R)}$, as in \eqref{eq4.1}. Now, we move to prove \eqref{eq1.19}. The inequalities \eqref{xav20} and \eqref{xav21} show that there exists $L>0$ such that
\begin{equation}\label{22}
\sup_{|w| \ge L}\sup_{t_0 \in \mathbb{R}}\sup_{t\geq 0}\|u_{\omega, t_0}(t)\|_{H^1(\R)} \le (1+M_0)+B\|u_{\omega, t_0}(T)\|_{H^{1}(\R)}=M_1<\infty.
\end{equation}

In what follows, we prove that $u_{\omega, t_0} \to U$ in the $\|\cdot\|_{X_t}$-norm, when $|\omega| \to \infty$.

Using  Duhamel's formulas for $u_{\omega, t_0}$ and $U$ we have
\begin{equation}\label{eq4.16}
\begin{split}
u_{\omega, t_0}(T+t) -U(T+t)&= S(t)(u_{\omega, t_0}(T) -U(T))\\
& \quad -\int_0^t S(t-t')g(\omega(T+t'+t_0))\partial_x (u_{\omega, t_0}^{k+1})(T+t') dt'\\
& \quad +m(g)\int_0^t S(t-t') \partial_x(U^{k+1})(T+t') dt'\\
&=:  I_1+I_2+I_3.
\end{split}
\end{equation}

 Using properties of the unitary group $S(t)$ we have by \eqref{xav20} that
\begin{align}\label{1xav}
 \|I_1\|_{X_t} =\|S(t)(u_{\omega, t_0}(T) -U(T))\|_{X_t} \le C \|u_{\omega, t_0}(T) -U(T)\|_{H^1(\R)} \stackrel{|\omega| \to \infty}{\rightarrow} 0.
\end{align}

 With the same argument as in \eqref{xav27}, we have
\begin{equation}\label{meq.1}
\|I_2\|_{X_t} \le C A\|u_{\omega, t_0}\|_{L_x^5L_{(T, \infty)}^{10}}^4\|u_{\omega, t_0}\|_{X_{(T, \infty)}},
\end{equation}

From \eqref{meq.1}, with the use of \eqref{xav21} and \eqref{22}, we have
\begin{equation}\label{2xav}
\|I_2\|_{X_t} \le  C A(2\epsilon)^4BM_1.
\end{equation}

As in $I_2$, using  \eqref{xav33} and \eqref{xav19},  we get
\begin{equation}\label{3xav}
\begin{split}
\|I_3\|_{X_t}&\le  %C A\|U\|_{L_x^5L_{(T, \infty)}^{10}}^5+ 
 C A\|U\|_{L_x^5L_{(T, \infty)}^{10}}^4\| U\|_{X_{(T, \infty)}}\\
& \le C A\Big(\frac\epsilon4\Big)^4BM_0.
\end{split}
\end{equation}

Now given $\beta>0$, we choose $\epsilon>0$ sufficiently small ($T$ sufficiently large) such that $C A(2\epsilon)^4\big[BM_0+BM_1 \big] < \beta/3$ and $|\omega|$ sufficiently large, so that \eqref{eq4.16}, \eqref{1xav}, \eqref{2xav} and \eqref{3xav} imply
\begin{equation}\label{eq4.19}
\begin{split}
\| u_{\omega, t_0}(t) -U(t) \|_{X_{(T, \infty)}} 
 &=  \|u_{\omega, t_0}(T+t) -U(T+t)\|_{X_t}\\
 & \le  \|I_1\|_{X_t}+\|I_2\|_{X_t}+\|I_3\|_{X_t}\\
 & <  \beta.
 \end{split}
 \end{equation}

On the other hand, from Theorem \ref{main.th1}, we have
\begin{equation}\label{eq4.20}
\| u_{\omega, t_0}(t) -U(t) \|_{X_{(0,T)}}=\| u_{\omega, t_0}(t) -U(t) \|_{X_{T}} \stackrel{|\omega| \to \infty}{\rightarrow}0.
\end{equation}

Therefore, from \eqref{eq4.19} and \eqref{eq4.20}, we can conclude the proof of the theorem.
\end{proof}
%%%%%%%%%%%%%%%%%%%%%%%%%%%%%%%%%%%%%%%%%%%%%%%%%%

\end{document}